% This is samplepaper.tex, a sample chapter demonstrating the
% LLNCS macro package for Springer Computer Science proceedings;
% Version 2.21 of 2022/01/12
%
\documentclass[runningheads]{llncs}

\usepackage[T1]{fontenc}
% T1 fonts will be used to generate the final print and online PDFs,
% so please use T1 fonts in your manuscript whenever possible.
% Other font encondings may result in incorrect characters.
%
\usepackage{graphicx}
% Used for displaying a sample figure. If possible, figure files should
% be included in EPS format.

\usepackage{amssymb}
\usepackage{amsmath}
\allowdisplaybreaks

\usepackage{hyperref}
% If you use the hyperref package, please uncomment the following two lines
% to display URLs in blue roman font according to Springer's eBook style:
\usepackage{color}

\usepackage{setspace}

\newcommand{\BR}{\mathbb{R}}
\newcommand{\BC}{\mathbb{C}}
\newcommand{\Mat}{{\rm Mat}}
\newcommand{\Det}{{\rm Det}}
\newcommand{\tr}{{\rm tr}}
\newcommand{\Adj}{{\rm Adj}}
\def\cl{\mathcal {G}}

\usepackage{cancel}

\begin{document}
%
%\title{On Vieta's formulas in geometric algebras%\thanks{Supported by organization x.}
%}

\title{Noncommutative Vieta Theorem in Clifford Geometric Algebras%\thanks{Supported by organization x.}
}

%
%\titlerunning{Abbreviated paper title}
% If the paper title is too long for the running head, you can set
% an abbreviated paper title here
%
\author{Dmitry Shirokov\inst{1,2}\orcidID{0000-0002-2407-9601} 
%\and
%Second Author\inst{1,2}\orcidID{1111-2222-3333-4444}
% \and
% Third Author\inst{3}\orcidID{2222--3333-4444-5555}
}
\authorrunning{Dmitry Shirokov}
% First names are abbreviated in the running head.
% If there are more than two authors, 'et al.' is used.
%

\institute{HSE University, 101000 Moscow, Russia\\
\email{dshirokov@hse.ru} \and
Institute for Information Transmission Problems of the Russian Academy of Sciences, 127051 Moscow, Russia\\
\email{shirokov@iitp.ru}
}

% \institute{Princeton University, Princeton NJ 08544, USA \and
% Springer Heidelberg, Tiergartenstr. 17, 69121 Heidelberg, Germany
% \email{lncs@springer.com}\\
%  \and
% ABC Institute, Rupert-Karls-University Heidelberg, Heidelberg, Germany\\
% \email{\{abc,lncs\}@uni-heidelberg.de}}

%
\maketitle              % typeset the header of the contribution
\begin{abstract}
In this paper, we discuss a generalization of Vieta theorem (Vieta's formulas) to the case of Clifford geometric algebras. We compare the generalized Vieta's formulas with the ordinary Vieta's formulas for characteristic polynomial containing eigenvalues. We discuss Gelfand--Retakh noncommutative Vieta theorem and use it for the case of geometric algebras of small dimensions. We introduce the notion of a simple basis-free formula for a determinant in geometric algebra and prove that a formula of this type exists in the case of arbitrary dimension. Using this notion, we present and prove generalized Vieta theorem in geometric algebra of arbitrary dimension. The results can be used in symbolic computation and various applications of geometric algebras in computer science, computer graphics, computer vision, physics, and engineering.

\keywords{Geometric algebra \and Clifford algebra \and Vieta theorem \and noncommutative Vieta theorem \and Vieta's formulas \and characteristic polynomial.}
\end{abstract}

\section{Introduction}

In algebra, Vieta's formulas (or Vieta theorem) relate the coefficients of any polynomial to sums and products of its roots. These formulas are named after the famous French mathematician François Viète (or Franciscus Vieta). In this paper, we extend Vieta's formulas to (Clifford) geometric algebras. We discuss the noncommutative Vieta theorem in geometric algebras and compare it with the ordinary Vieta theorem. 

In this paper, the notion of characteristic polynomial in geometric algebras is used. Note that the determinant is used to calculate the inverse in geometric algebras \cite{rudn,hitzer1,acus,det}.
In \cite{det,Abd2,Abd}, the explicit formulas for the characteristic polynomial coefficients $C_{(k)}$ are presented in the cases $n\leq 6$. The characteristic polynomial in geometric algebras is also discussed in \cite{Helm} and used to solve the Sylvester and Lyapunov equations in \cite{ShirokovSylv,ShirokovSylv2}. It is used to calculate exponentials and other elementary functions of multivectors in \cite{acus2}. We expect further application of the characteristic polynomial and related concepts in symbolic computation, computer science, computer graphics, computer vision, physics, and engineering using the formalism of geometric algebras.

In this paper, we discuss three different approaches to the problem of computing the characteristic polynomial coefficients: 1) generalized Vieta formulas based on the Gelfand--Retakh theorem (for $n\leq 3$), where solutions of characteristic equations are provided by multivectors (non-scalars); 2) recursive formulas based on the Faddeev--LeVerrier algorithm (for arbitrary $n$); and 3) explicit optimal formulas, which follow from the optimized form of the determinant (proved for arbitrary $n$, but explicitly known for $n\leq 6$). The first approach is discussed in Sections~\ref{sectVieta}--\ref{cl23}. The second approach is discussed at the beginning of Section~\ref{sectBasis} (and in detail in the previous paper \cite{det}) and is used to obtain the third approach in Sections~\ref{sectBasis}--\ref{sectBar}. Note that Vieta's original theorem is about roots of a one-variable polynomial equation. The Gelfand--Retakh theorem generalizes Vieta theorem to the noncommutative case (for example, for complex matrices or Clifford algebra elements). In this paper, we apply the Gelfand--Retakh theorem to a particular case of the characteristic equation in Clifford geometric algebras, whose coefficients are scalars, but the variable is not a scalar.

This paper is organized as follows. In Section \ref{sectVieta}, we introduce generalized Vieta's formulas in geometric algebras and compare them with the ordinary Vieta's formulas for characteristic polynomial containing eigenvalues. The generalized Vieta's formulas do not contain eigenvalues. In Section \ref{secn23}, we discuss Gelfand--Retakh noncommutative Vieta theorem for an arbitrary skew-field with some remarks. In Section \ref{cl23}, we apply noncommutative Vieta theorem to the geometric algebras $\cl_{p,q}$ in the case of small dimensions $n=p+q$. In Section \ref{sectBasis}, we introduce the notion of a simple basis-free formula for the determinant in geometric algebra and prove that a formula of this type exists in the case of arbitrary $n$. In Section \ref{sectGeneral}, we present and prove generalized Vieta theorem in geometric algebra $\cl_{p,q}$ with arbitrary $n=p+q$. In Section \ref{sectBar}, we discuss three alternative approaches to finding optimized formulas for the determinant and other characteristic polynomial coefficients using the bar-operation. The conclusions follow in Section \ref{sectConcl}.

This paper is an extended version of the short note in Conference Proceedings \cite{ShirokovCGI2022} (Empowering Novel Geometric Algebra for Graphics \& Engineering Workshop, Computer Graphics International 2022). Sections \ref{sectBasis}, \ref{sectGeneral}, and \ref{sectBar} are new, Theorems \ref{thCN}, \ref{thGeneral}, \ref{th5}, and \ref{th6} are presented for the first time.

\section{Generalized Vieta's formulas in geometric algebras}\label{sectVieta}

Let us consider the real (Clifford) geometric algebra $\cl_{p,q}$, $n=p+q\geq 1$ \cite{Hestenes,Doran,Lounesto} with the generators $e_a$, $a=1, 2, \ldots, n$ and the identity element $e\equiv 1$. The generators satisfy 
$$
e_a e_b+e_b e_a=2 \eta_{ab} e,\qquad  a, b =1, 2, \ldots, n,
$$
where $\eta=(\eta_{ab})$ is the diagonal matrix with its first $p$ entries equal to $1$ and the last $q$ entries equal to $-1$ on the diagonal. The grade involution and reversion of an arbitrary element (a multivector) $U\in\cl_{p,q}$ are denoted by 
$$
\widehat{U}=\sum_{k=0}^n(-1)^{k}\langle U \rangle_k,\qquad 
\widetilde{U}=\sum_{k=0}^n (-1)^{\frac{k(k-1)}{2}} \langle U \rangle_k,
$$
where $\langle U\rangle_k$ is the projection of $U$ onto the subspace $\cl_{p,q}^k$ of grade $k=0, 1, \ldots, n$. We have
\begin{eqnarray}
\widehat{UV}=\widehat{U} \widehat{V},\qquad \widetilde{UV}=\widetilde{V} \widetilde{U},\qquad \forall U, V\in\cl_{p,q}.\label{invol}
\end{eqnarray}

Let us consider the following faithful representation (isomorphism) of the complexified geometric algebra $\BC\otimes\cl_{p,q}$, $n=p+q$
\begin{eqnarray}
\beta: \BC\otimes\cl_{p,q} \to M_{p,q}:=
\left\lbrace
\begin{array}{ll}
\Mat(2^{\frac{n}{2}}, \BC) &\quad\mbox{if $n$ is even},\\
\Mat(2^{\frac{n-1}{2}}, \BC)\oplus\Mat(2^{\frac{n-1}{2}}, \BC)&\quad\mbox{if $n$ is odd}.
\end{array}
\right.
\end{eqnarray}
The real geometric algebra $\cl_{p,q}$ is isomorphic to some subalgebra of $M_{p,q}$, because $\cl_{p,q}\subset\BC\otimes\cl_{p,q}$ and we can consider the representation of not minimal dimension
$$\beta: \cl_{p,q} \to \beta(\cl_{p,q})\subset M_{p,q}.$$
We can introduce (see \cite{det}) the notion of determinant
$$\Det(U):=\det(\beta(U))\in\BR,\qquad U\in\cl_{p,q}$$
and the notion of characteristic polynomial
\begin{eqnarray}
&&\varphi_U(\lambda):=\Det(\lambda e-U)=\lambda^N-C_{(1)}\lambda^{N-1}-\cdots-C_{(N-1)}\lambda-C_{(N)}\in\cl^0_{p,q}\equiv\BR,\nonumber\\
&&U\in\cl_{p,q},\qquad N=2^{[\frac{n+1}{2}]},\qquad C_{(k)}=C_{(k)}(U)\in\cl^0_{p,q}\equiv\BR,\quad k=1, \ldots, N,\label{char}
\end{eqnarray}
where $\cl^0_{p,q}$ is a subspace of elements of grade $0$, which we identify with scalars. 

Let us denote the solutions of the characteristic equation $\varphi_U(\lambda)=0$ (i.e. eigenvalues) by $\lambda_1, \ldots, \lambda_N$. By the Vieta's formulas from matrix theory, we know that
$$C_{(k)}=(-1)^{k+1} \sum_{1\leq i_1 <i_2<\cdots<i_k\leq n} \lambda_{i_1}\lambda_{i_2}\cdots \lambda_{i_k},\qquad k=1, \ldots, N,$$
in particular,
$$C_{(1)}=\lambda_1+\cdots+\lambda_N={\rm Tr}(U),\qquad \ldots,\qquad C_{(N)}=-\lambda_1\cdots \lambda_N=-{\rm Det}(U),$$
where ${\rm Tr}(U):=\tr(\beta(U))=N \langle U \rangle_0$ is the trace of $U$. The elements $C_{(k)}$, $k=1, \ldots, N$ are elementary symmetrical polynomials in the variables $\lambda_1$, \ldots, $\lambda_N$.

\subsection{The case $n=1$}

Let us consider the particular case $n=1$. In this case, the geometric algebra $\cl_{p,q}$ is commutative and $N=2$. We have
\begin{eqnarray}
C_{(1)}=\lambda_1+\lambda_2 \in\BR,\qquad C_{(2)}=-\lambda_1 \lambda_2\in\BR.\label{n1lambda}
\end{eqnarray}
But also we have (see \cite{det})
\begin{eqnarray}
C_{(1)}=U+\widehat{U}\in\cl^0_{p,q}\equiv\BR,\qquad C_{(2)}=-U\widehat{U}\in\cl^0_{p,q}\equiv\BR.\label{n1}
\end{eqnarray}
The elements $y_1:=U$ and $y_2:=\widehat{U}$ are not scalars (and are not equal to the eigenvalues $\lambda_1$ and $\lambda_2$), but they are solutions of the characteristic equation $\varphi_U(x)=0$, $x=y_1, y_2$ by the Cayley--Hamilton theorem (see the details in Section \ref{cl23}). Using
$$\lambda^2-(U+\widehat{U})\lambda+U\widehat{U}=0,$$
we get the explicit formulas for the eigenvalues
\begin{eqnarray}
\lambda_{1,2}&=&\frac{1}{2}(U+\widehat{U}\pm \sqrt{(U+\widehat{U})^2-4U\widehat{U}})=\frac{1}{2}(U+\widehat{U}\pm \sqrt{(U-\widehat{U})^2})\nonumber\\
&=& \langle U\rangle_0 \pm \sqrt{(\langle U \rangle _1)^2},
\end{eqnarray}
which do not coincide with the explicit formulas for $y_{1,2}$
\begin{eqnarray}
y_{1,2}=\langle U\rangle_0 \pm \langle U\rangle_1,
\end{eqnarray}
because the scalar $\sqrt{(\langle U \rangle _1)^2}$ does not coincide with the vector (element of grade~$1$) $\langle U\rangle_1$. We see that the role of the roots $\lambda_{1,2}$ (which are complex scalars) of the characteristic equation is played by some combinations $y_{1,2}$ of involutions of elements (which are not scalars). In the case of degenerate eigenvalues, we have $\langle U \rangle_1=0$ and the coincidence $\lambda_{1,2}=y_{1,2}=U=\langle U\rangle_0$.

\subsection{The case $n=2$}

Let us consider the particular case $n=2$. We have $N=2$ and
\begin{eqnarray}
C_{(1)}=\lambda_1+\lambda_2 \in\BR,\qquad C_{(2)}=-\lambda_1 \lambda_2\in\BR.\label{n2lambda}
\end{eqnarray}
But also we have (see \cite{det})
\begin{eqnarray}
C_{(1)}=U+\widehat{\widetilde{U}}\in\cl^0_{p,q}\equiv\BR,\qquad C_{(2)}=-U\widehat{\widetilde{U}}\in\cl^0_{p,q}\equiv\BR.\label{n2}
\end{eqnarray}
Note that $U\widehat{\widetilde{U}}=\widehat{\widetilde{U}} U$ in the case $n=2$. The elements $y_1:=U$ and $y_2:=\widehat{\widetilde{U}}$ are not scalars (and are not equal to the eigenvalues $\lambda_1$ and $\lambda_2$), but they are solutions of the characteristic equation $\varphi_U(x)=0$, $x=y_1, y_2$ by the Cayley--Hamilton theorem (see the details in Section \ref{cl23}). Using
$$\lambda^2-(U+\widehat{\widetilde{U}})\lambda+U\widehat{\widetilde{U}}=0,$$
we get the explicit formulas for the eigenvalues
\begin{eqnarray}
\lambda_{1,2}&=&\frac{1}{2}(U+\widehat{\widetilde{U}}\pm \sqrt{(U+\widehat{\widetilde{U}})^2-4U\widehat{\widetilde{U}}})=\frac{1}{2}(U+\widehat{\widetilde{U}}\pm \sqrt{(U-\widehat{\widetilde{U}})^2})\nonumber\\
&=& \langle U\rangle_0 \pm \sqrt{(\langle U\rangle_1+\langle U\rangle_2)^2},
\end{eqnarray}
which do not coincide with the explicit formulas for $y_{1,2}$
\begin{eqnarray}
y_{1,2}=\langle U\rangle_0 \pm (\langle U\rangle_1+\langle U\rangle_2),
\end{eqnarray}
where the scalar $\sqrt{(\langle U\rangle_1+\langle U\rangle_2)^2}=\sqrt{(\langle U\rangle_1)^2+(\langle U\rangle_2)^2}$ is not equal to the expression $\langle U\rangle_1+\langle U\rangle_2$. The role of the roots $\lambda_{1,2}$ (which are complex scalars) of the characteristic equation is played by some combinations $y_{1,2}$ of involutions of elements (which are not scalars). 

In the case of degenerate eigenvalues, we have $\langle U \rangle_1=\langle U \rangle_2=0$ and the coincidence $\lambda_{1,2}=y_{1,2}=U=\langle U\rangle_0$ in the case of two Jordan blocks; or $(\langle U \rangle_1)^2=-(\langle U \rangle_2)^2\neq 0$ and $\lambda_{1,2}=\langle U \rangle_0\neq y_{1,2}=\langle U\rangle_0 \pm (\langle U\rangle_1+\langle U\rangle_2)$ in the case of one Jordan block. For example, for $U=5e+\frac{1}{2}(e_2+e_{12})$, we have $\lambda_{1,2}=5$ and $y_{1,2}=5e\pm\frac{1}{2}(e_2+e_{12})$ in the case $n=p=2$, $q=0$.

\subsection{The case $n=3$}

Let us consider the case $n=3$. We have $N=4$ and the formulas (see \cite{det})
\begin{eqnarray}
&&C_{(1)}=
    U+\widehat{U}+\widetilde{U}+\widehat{\widetilde{U}},\nonumber\\
    &&C_{(2)}=-(
    U\widetilde{U}+
    U\widehat{U}+
    U\widehat{\widetilde{U}}+
    \widehat{U}\widehat{\widetilde{U}}+
    \widetilde{U}\widehat{\widetilde{U}}+
    \widehat{U}\widetilde{U}),\nonumber\\
   &&C_{(3)}=
    U\widehat{U}\widetilde{U}+
    U\widehat{U}\widehat{\widetilde{U}}+
    U\widetilde{U}\widehat{\widetilde{U}}+
    \widehat{U}\widetilde{U}\widehat{\widetilde{U}},\nonumber\\
   &&C_{(4)}=-
    U\widehat{U}\widetilde{U}\widehat{\widetilde{U}}.\label{n3}
\end{eqnarray}
These formulas look like the ordinary Vieta's formulas for eigenvalues:
\begin{eqnarray}
&&C_{(1)}=
    \lambda_1+\lambda_2+\lambda_3+\lambda_4,\nonumber\\
&&C_{(2)}=-(
    \lambda_1\lambda_2+
    \lambda_1\lambda_3+
    \lambda_1\lambda_4+
    \lambda_2\lambda_3+
    \lambda_2\lambda_4+
    \lambda_3\lambda_4),\nonumber\\
   && C_{(3)}=
    \lambda_1\lambda_2\lambda_3+
    \lambda_1\lambda_2\lambda_4+
    \lambda_1\lambda_3\lambda_4+
    \lambda_2\lambda_3\lambda_4,\nonumber\\
    &&C_{(4)}=-
    \lambda_1\lambda_2\lambda_3\lambda_4.\label{n3lambda}
\end{eqnarray}
The elements 
\begin{eqnarray*}
&&\!\!y_1:=U=\langle  U \rangle_0+\langle U\rangle_1+\langle U \rangle_2+ \langle U \rangle_3,\quad y_2:=\widetilde{U}=\langle  U \rangle_0+\langle U\rangle_1-\langle U \rangle_2- \langle U \rangle_3,\\
&&\!\!y_3:=\widehat{U}=\langle  U \rangle_0-\langle U\rangle_1+\langle U \rangle_2- \langle U \rangle_3,\quad y_4:=\widetilde{\widehat{U}}=\langle  U \rangle_0-\langle U\rangle_1-\langle U \rangle_2+ \langle U \rangle_3,
\end{eqnarray*}
 are not scalars (and are not equal to the eigenvalues $\lambda_1, \lambda_2, \lambda_3, \lambda_4$), but they are solutions of the characteristic equation $\varphi_U(x)=0$, $x=y_1, y_2, y_3, y_4$ by the Cayley--Hamilton theorem (see the details in Section \ref{cl23}). 

\bigskip

We call the formulas (\ref{n1}), (\ref{n2}), (\ref{n3}) and their analogues for the cases $n\geq 4$ {\it generalized Vieta's formulas in geometric algebra}. The formulas (\ref{n1}), (\ref{n2}), (\ref{n3}) were proved in \cite{det} using recursive formulas for the characteristic polynomial coefficients following from the Faddeev--LeVerrier algorithm. In this paper, we present an alternative proof of these formulas using the techniques of noncommutative symmetric functions (see Sections \ref{secn23} and \ref{cl23}).

\section{On Gelfand--Retakh noncommutative Vieta theorem}\label{secn23}

Let us discuss the following Gelfand--Retakh theorem (known as the {\it noncommutative Vieta theorem} \cite{Gelfand}, see also \cite{Sch1,Sch2}). In Section \ref{cl23}, we use it for the characteristic polynomial in geometric algebras.

\begin{theorem}[\cite{Gelfand}]\label{thGel} If $\{x_1, \ldots, x_N\}$ is an ordered generic set (i.e. Vandermonde quasideterminants $v_k$ are defined and invertible for all $k=1, \ldots, N$) of solutions of the equation
\begin{eqnarray}
P_N(x):=x^N-a_1x^{N-1}-\cdots-a_N=0\label{PN}
\end{eqnarray}
over a skew-field, then for $k=1, 2, \ldots, N$:
$$a_k=(-1)^{k+1} \sum_{1\leq i_1 <i_2<\cdots<i_k\leq N} y_{i_k}\cdots y_{i_1},$$
where
$$y_k=v_k x_k v_k^{-1}.$$
\end{theorem}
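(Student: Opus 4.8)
The plan is to proceed by induction on $N$, exploiting the factorization of the polynomial $P_N(x)$ over the skew-field into noncommutative linear factors indexed by the "Vieta-twisted" roots $y_k = v_k x_k v_k^{-1}$. First I would recall the notion of Vandermonde quasideterminant $v_k$ attached to the ordered generic set $\{x_1,\dots,x_N\}$; the genericity hypothesis guarantees each $v_k$ is defined and invertible, so each $y_k$ makes sense. The key structural fact I would invoke (this is the heart of the Gelfand–Retakh theory of noncommutative symmetric functions) is the \emph{noncommutative Bézout / Vieta factorization}
\begin{eqnarray}
P_N(x) = (x - y_N)(x - y_{N-1})\cdots(x - y_1),\label{eq:factorization}
\end{eqnarray}
valid as an identity of polynomials in the central indeterminate $x$ with coefficients in the skew-field. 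Granting \eqref{eq:factorization}, the theorem follows by expanding the right-hand side: the coefficient of $x^{N-k}$ is, up to sign, the $k$-th elementary noncommutative symmetric function $\Lambda_k(y_1,\dots,y_N) = \sum_{1\le i_1<\cdots<i_k\le N} y_{i_k}\cdots y_{i_1}$, with the reversed order of the factors dictated precisely by the order of multiplication in \eqref{eq:factorization}. Matching this against $P_N(x) = x^N - a_1 x^{N-1} - \cdots - a_N$ yields $a_k = (-1)^{k+1}\Lambda_k(y_1,\dots,y_N)$, which is the claim.

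To establish \eqref{eq:factorization} I would argue inductively. For $N=1$ it is trivial: $v_1 = 1$, $y_1 = x_1$, and $P_1(x) = x - a_1 = x - x_1$. For the inductive step, suppose the result holds for ordered generic sets of size $N-1$. Given $\{x_1,\dots,x_N\}$, one knows that $x_N$ is a root of $P_N$, so by the noncommutative remainder theorem $P_N(x) = Q_{N-1}(x)(x - x_N)$ for a monic polynomial $Q_{N-1}$ of degree $N-1$ — here one must be careful about the side on which the linear factor is extracted, and I would place $(x-x_N)$ on the right to match the convention in \eqref{eq:factorization}. The remaining roots of $Q_{N-1}$ turn out to be not the $x_i$ themselves but their conjugates by the appropriate partial Vandermonde quasideterminants; the combinatorics of how $v_k$ for the size-$N$ set relates to the Vandermonde quasideterminants of the size-$(N-1)$ set $\{x_1,\dots,x_{N-1}\}$ after "dividing out" $x_N$ is exactly the recursion satisfied by quasideterminants. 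Feeding the induction hypothesis into $Q_{N-1}(x) = (x - y_N)(x - y_{N-1})\cdots(x-y_2)$ — with a consistent re-indexing — and reattaching the factor $(x - y_1)$ (after checking $y_1 = x_1$, since $v_1$ is the empty quasideterminant equal to $1$) gives \eqref{eq:factorization}.

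The main obstacle, and the step I expect to require the most care, is the bookkeeping in the inductive step: tracking how the Vandermonde quasideterminants $v_k$ transform when one passes from the full ordered set to a sub-set after removing a root, and verifying that the conjugating elements produced by the remainder-theorem factorization are precisely the $v_k$ of the definition. This is where the genericity hypothesis is essential — it is what keeps every quasideterminant that appears in the recursion defined and invertible, so that the conjugations $y_k = v_k x_k v_k^{-1}$ never degenerate. I would isolate this as a lemma on the behavior of Vandermonde quasideterminants under deflation, cite the standard quasideterminant identities of Gelfand–Retakh \cite{Gelfand} for it, and then the rest of the argument is the routine expansion of \eqref{eq:factorization} and comparison of coefficients.
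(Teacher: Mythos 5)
First, a point of comparison: the paper does not actually prove Theorem~\ref{thGel} --- it is imported from Gelfand--Retakh --- and what follows the statement is only a verification for $N\le 3$ built on Fung's definition $v_k=P_{k-1}(x_k)$ and the recursion $P_k(x)=P_{k-1}(x)\,x-y_kP_{k-1}(x)$, from which $y_k=v_kx_kv_k^{-1}$ is read off directly by evaluating at $x=x_k$ (the evaluation gives $v_kx_k=y_kv_k$). Your overall architecture --- factor $P_N$ as $(x-y_N)\cdots(x-y_1)$ in a central indeterminate and expand to obtain the reversed elementary symmetric functions --- is the standard route, and your expansion step (the coefficient of $x^{N-k}$ being $(-1)^k\sum y_{i_k}\cdots y_{i_1}$) is correct.

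However, the inductive step as written would fail, for two reasons. (a) The deflation is performed at the wrong end. In your target factorization the \emph{rightmost} factor is $(x-y_1)=(x-x_1)$, since $v_1=1$ forces $y_1=x_1$; and for left-coefficient polynomials the noncommutative remainder theorem extracts a \emph{right} factor $(x-c)$ from a root $c$. So the root that must be split off first is $x_1$, not $x_N$: writing $P_N(x)=Q_{N-1}(x)(x-x_N)$ produces the factorization adapted to a different ordering of the set (one in which $x_N$ is the unconjugated root), its deflated roots are conjugated by $(x_i-x_N)$ rather than by the $v_k$ of the statement, and it cannot be reconciled with your subsequent step of ``reattaching $(x-y_1)$'' on the right. (b) More seriously, the entire content of the theorem --- that the conjugating elements generated by repeated deflation are precisely the Vandermonde quasideterminants --- is exactly the ``lemma on deflation'' that you defer to a citation of \cite{Gelfand}, i.e., to the source of the theorem being proved; as a proof, this is circular and nothing is actually established. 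A short self-contained argument is available along the lines the paper sketches for $N\le3$: take $v_k:=P_{k-1}(x_k)$, define $P_k(x):=P_{k-1}(x)\,x-y_kP_{k-1}(x)$ with $y_k:=v_kx_kv_k^{-1}$, and check inductively that each $x_j$ with $j\le k$ annihilates $P_k$ under left-coefficient evaluation (taking care that evaluation at a noncentral element is not multiplicative, so one may not simply plug a root into the factored form); genericity then identifies $P_N$ with the polynomial in \eqref{PN} and the coefficients are the stated signed symmetric functions.
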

In \cite{Gelfand}, the definition of Vandermonde quasideterminants $v_k$ is given (see also \cite{Gelfand2,Gelfand3}). In this paper, we use another definition of the elements $v_k$ from \cite{Fung}:
\begin{eqnarray}
\!\!\!v_k=P_{k-1}(x_{k})=x_k^{k-1}-(y_{k-1}+\cdots+y_1)x_k^{k-2}+\cdots+ (-1)^{k-1}y_{k-1}\cdots y_1.
\end{eqnarray}
In particular, we have
\begin{eqnarray}
v_1=1,\qquad v_2=x_2-y_1,\qquad v_3=x_3^2-(y_2+y_1)x_3+y_2y_1.
\end{eqnarray}
Let us give examples.

In the case $N=1$, substituting $x=x_1$ into (\ref{PN}), we obtain $y_1=x_1=a_1$ and $v_1=1$.

In the case $N=2$, the equation (\ref{PN}) with $a_1=y_2+y_1$ and $a_2=-y_2 y_1$ can be rewritten in the form
\begin{eqnarray}
(x-y_1)x-y_2(x-y_1)=0.\label{r0}
\end{eqnarray}
Substituting $x=x_1$ into (\ref{r0}), we obtain $(x_1-y_1)x_1-y_2(x_1-y_1)=0$ and we can take $y_1=x_1$.
Substituting $x=x_2$ into (\ref{r0}), we obtain
\begin{eqnarray}
(x_2-y_1)x_2-y_2(x_2-y_1)=0\label{r1}
\end{eqnarray}
and we can take $y_2=v_2 x_2 v_2^{-1}$ in the case of invertible $v_2:=x_2-y_1=x_2-x_1$.
If $[x_2, v_2]=0$, then $y_2=x_2$ by (\ref{r1}).

In the case $N=3$, the equation (\ref{PN}) with $a_1=y_3+y_2+y_1$, $a_2=-(y_3 y_2+y_3y_1+y_2y_1)$, and $a_3=y_3y_2y_1$ can be rewritten in the form
\begin{eqnarray}
(x^2-(y_2+y_1)x+y_2 y_1)x-y_3(x^2-(y_2+y_1)x+y_2 y_1)=0.\label{r2}
\end{eqnarray}
Substituting $x=x_1$ and $x=x_2$ into (\ref{r2}), we conclude that we can take again $y_1=x_1$ and $y_2=v_2 x_2 v_2^{-1}$ in the case of invertible $v_2=x_2-y_1=x_2-x_1$. If $[x_2, v_2]=0$, then $y_2=x_2$. Substituting $x=x_3$ into (\ref{r2}), we obtain
$(x_3^2-(y_2+y_1)x_3+y_2 y_1)x_3-y_3(x_3^2-(y_2+y_1)x_3+y_2 y_1)=0$ and we can take $y_3=v_3 x_3 v_3^{-1}$ in the case of invertible $v_3:=x_3^2-(y_2+y_1)x_3+y_2 y_1$. We get $y_3=x_3$ in the case $[x_3, v_3]=0$. And so on.

\begin{remark}
The condition $[v_k, x_k]=0$ is equivalent to
$$[E_j, x_k]=0,\qquad j=1, \ldots, k-1,$$
where $E_j$, $j=1, \ldots, k-1$ are noncommutative elementary symmetric polynomials in the variables $y_{k-1}$, \ldots, $y_1$:
$$E_1=y_{k-1}+\cdots+y_1,\qquad E_{k-1}=y_{k-1}\cdots y_2 y_1.$$
\end{remark}
For example, in the particular case $N=4$, when all $[v_k, x_k]=0$, $k=1, \ldots, N$, we can take $y_k=x_k$, $k=1, 2, 3, 4$, in the case
\begin{eqnarray*}
&&[x_2, x_1]=0,\quad [x_3, x_2x_1]=0,\quad [x_3, x_2+x_1]=0,\quad [x_4, x_3x_2x_1]=0,\\
&&[x_4, x_3x_2+x_3x_1+x_2x_1]=0,\quad [x_4, x_3+x_2+x_1]=0.
\end{eqnarray*}
We use this particular case below in $\cl_{p,q}$ with $n=p+q=3$.

\section{Application of noncommutative Vieta theorem to geometric algebras}\label{cl23}

Let us apply Theorem \ref{thGel} to the particular case of the characteristic polynomial $\varphi_U(\lambda)$ in geometric algebra $\cl_{p,q}$. The elements $a_k=C_{(k)}\in\BR$, $k=1, \ldots, N$ from (\ref{PN}) are scalars now. We need $N$ solutions $x_1, x_2, \ldots x_N$ of the characterstic equation $\varphi_U(x)=0$. By the Cayley--Hamilton theorem, we can take $x_1=U$: 
\begin{eqnarray}
\varphi_U (U)=0.
\end{eqnarray}
We have the following statement.

\begin{theorem}\label{thCH} Let us consider the characteristic polynomial $\varphi_U(\lambda)$ (\ref{char}) of an arbitrary element $U\in\cl_{p,q}$. We have
\begin{eqnarray}
&&\varphi_U(\lambda)=\varphi_{\widehat{U}}(\lambda)=\varphi_{\widetilde{U}}(\lambda)=\varphi_{\widetilde{\widehat{U}}}(\lambda),\\
&&\varphi_U(\widetilde{U})=\varphi_U(\widehat{U})=\varphi_U(\widehat{\widetilde{U}})=0.
\end{eqnarray}
\end{theorem}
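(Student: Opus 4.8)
The plan is to first prove the polynomial identity $\varphi_U(\lambda)=\varphi_{\widehat{U}}(\lambda)=\varphi_{\widetilde{U}}(\lambda)=\varphi_{\widetilde{\widehat{U}}}(\lambda)$, and then read off the three vanishing identities from the classical Cayley--Hamilton theorem applied to the matrices $\beta(\widehat{U})$, $\beta(\widetilde{U})$, $\beta(\widehat{\widetilde{U}})$, using the first identity to replace their characteristic polynomials by $\varphi_U$.

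For the first identity I would reduce everything to traces. Since $\varphi_U(\lambda)=\Det(\lambda e-U)=\det(\lambda I-\beta(U))$ is the ordinary characteristic polynomial of the matrix $\beta(U)$ over $\BC$, its coefficients are universal polynomials in the power traces ${\rm Tr}(U^j)=\tr(\beta(U)^j)$ for $j=1,\ldots,N$; this is Newton's identities, or equivalently the Faddeev--LeVerrier recursion already used in the excerpt for (\ref{n1})--(\ref{n3}), and it is valid because we work in characteristic zero. Hence it suffices to show ${\rm Tr}(U^j)={\rm Tr}(\widehat{U}^j)={\rm Tr}(\widetilde{U}^j)={\rm Tr}(\widehat{\widetilde{U}}^j)$ for each $j$.

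The key observation is that the grade involution is an automorphism and the reversion is an anti-automorphism of $\cl_{p,q}$ (see (\ref{invol})), so for a single element $U$ and any power $j$ one has $\widehat{U}^j=\widehat{U^j}$, $\widetilde{U}^j=\widetilde{U^j}$, and $\widehat{\widetilde{U}}^j=\widehat{\widetilde{U^j}}$ (the anti-automorphism property causes no trouble for powers of one element). Combining this with ${\rm Tr}(V)=N\langle V\rangle_0$ and the fact that each of $\widehat{\phantom{U}}$, $\widetilde{\phantom{U}}$, $\widehat{\widetilde{\phantom{U}}}$ fixes the grade-$0$ component (the relevant sign in the definitions of $\widehat{U}$ and $\widetilde{U}$ is $+1$ for $k=0$), we get ${\rm Tr}(\widehat{U}^j)=N\langle\widehat{U^j}\rangle_0=N\langle U^j\rangle_0={\rm Tr}(U^j)$, and similarly for $\widetilde{U}$ and $\widehat{\widetilde{U}}$. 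This yields the equality of the four characteristic polynomials.

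For the last line, apply Cayley--Hamilton to the matrix $\beta(\widehat{U})$: it annihilates its own characteristic polynomial $\varphi_{\widehat{U}}$, which by the first part equals $\varphi_U$; since $\beta$ is a faithful representation, $\varphi_U(\widehat{U})=0$ in $\cl_{p,q}$, and the same argument with $\beta(\widetilde{U})$ and $\beta(\widehat{\widetilde{U}})$ gives $\varphi_U(\widetilde{U})=\varphi_U(\widehat{\widetilde{U}})=0$. The only point that deserves care is the step ``equal power traces $\Rightarrow$ equal characteristic polynomials,'' which must be justified by the characteristic-zero hypothesis; a more direct representation-theoretic route (showing $\beta(\widehat{U})$ and $\beta(\widetilde{U})$ are each similar to $\beta(U)$, e.g. by conjugating with the image of the pseudoscalar when $n$ is even) also works but is less uniform across the parity of $n$, so I would keep the trace argument as the main line.
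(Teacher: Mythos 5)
Your proof is correct, but the first half takes a genuinely different route from the paper. The paper proves $\varphi_U=\varphi_{\widehat{U}}=\varphi_{\widetilde{U}}=\varphi_{\widehat{\widetilde{U}}}$ in one line by citing the invariance of the determinant under the involutions (Lemma 10 of \cite{det}) and observing that $\Det(\lambda e-\widehat{U})=\Det(\widehat{\lambda e-U})=\Det(\lambda e-U)$, since the grade involution is linear and fixes $\lambda e$; your argument instead reduces the coefficients to the power traces ${\rm Tr}(U^j)=N\langle U^j\rangle_0$ via Newton's identities and checks that the (anti-)automorphism property plus preservation of the grade-$0$ part forces $\langle \widehat{U}^j\rangle_0=\langle \widehat{U^j}\rangle_0=\langle U^j\rangle_0$, and likewise for $\widetilde{U}$ and $\widehat{\widetilde{U}}$. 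What you lose is brevity (and you must explicitly invoke characteristic zero to invert Newton's identities, as you correctly note); what you gain is self-containedness, since your argument in effect reproves the cited determinant-invariance lemma rather than assuming it, and it makes transparent exactly which properties of the conjugations are used. For the vanishing identities the two arguments are essentially the same use of Cayley--Hamilton, only organized differently: you apply it directly to the matrices $\beta(\widehat{U})$, $\beta(\widetilde{U})$, $\beta(\widehat{\widetilde{U}})$ and pull back by faithfulness of $\beta$, whereas the paper starts from $\varphi_U(U)=0$, rewrites it as $\varphi_{\widehat{U}}(U)=0$ using the first identity, and then substitutes $\widehat{U}$ for $U$ (using $\widehat{\widehat{U}}=U$). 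Both are valid; no gaps.
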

\begin{proof} We know that (see Lemma 10 in \cite{det})
$$\Det(U)=\Det(\widehat{U})=\Det(\widetilde{U})=\Det(\widehat{\widetilde{U}}).$$
Using the definition of characteristic polynomial (\ref{char}), we get
$$\varphi_{\widehat{U}}(\lambda)=\Det(\lambda e - \widehat{U})=\Det(\widehat{\lambda e - U})=\Det(\lambda e-U)=\varphi_U(\lambda).
$$
Using the Cayley--Hamilton theorem $\varphi_U(U)=0$, we get $\varphi_{\widehat{U}}(U)=0$. Substituting $\widehat{U}$ for $U$, we get $\varphi_U(\widehat{U})=0$. We obtain the other formulas in a similar way. \qed
\end{proof}

\subsection{The case $n=1$}

In this case, the geometric algebra is commutative. We can take $y_1=x_1=U$ in Theorem \ref{thGel} by the Cayley--Hamilton theorem. The element $x_2=\widehat{U}$ satisfies the characteristic equation (see Theorem~\ref{thCH}). We have $v_2=x_2-x_1=\widehat{U}-U=-2\langle U \rangle_1$. If $\langle U \rangle_1\neq 0$, then $y_2=x_2=\widehat{U}$ and we obtain the formulas (\ref{n1}).

\subsection{The case $n=2$}

We can take $y_1=x_1=U$ in Theorem \ref{thGel} by the Cayley--Hamilton theorem. The element $x_2=\widetilde{U}$ satisfies the characteristic equation (see Theorem~\ref{thCH}). We have $v_2=x_2-x_1=\widetilde{U}-U=-2\langle U \rangle_2$. If $\langle U \rangle_2=0$, then we can use the formulas from the case $n=1$. If $\langle U \rangle_2\neq 0$, then $v_2=\lambda e_{12}$, $\lambda\neq 0 $ is invertible and
$y_2=v_2 \widetilde{U} v_2^{-1}= \widetilde{\widehat{U}}$ because the pseudoscalar $e_{12}$ commutes with all even elements and anticommutes with all odd elements. We get the formulas (\ref{n2}).

\subsection{The case $n=3$}

We can take $y_1=x_1=U$ in Theorem \ref{thGel} by the Cayley--Hamilton theorem. 

Let us consider $x_2=\widehat{\widetilde{U}}$. We have $v_2=x_2-x_1=\widehat{\widetilde{U}}-U$ and $[v_2, x_2]=0$ because $[U,  \widehat{\widetilde{U}}]=0$ in the case $n=3$. Thus we can take $y_2=x_2$.

Let us consider $x_3=\widehat{U}$. We have
$$[x_3, v_3]=[x_3, x_3^2-(x_1+x_2)x_3+x_2x_1]=0,$$
because the elements $x_1+x_2=U+\widehat{\widetilde{U}}$ and $x_2 x_1=\widehat{\widetilde{U}} U$ belong to the center ${\rm Cen}(\cl_{p,q})=\cl^0_{p,q}\oplus\cl^3_{p,q}$, and can take $y_3=x_3$.

Let us consider $x_4=\widetilde{U}$. We have
$$[x_4, v_4]=[x_4, x_4^3-(x_3+x_2+x_1)x_4^2+(x_3x_2+x_3x_1+x_2x_1)x_4-(x_3x_2x_1)]=0,$$
because the elements $x_1+x_2=U+\widehat{\widetilde{U}}$ and $x_2 x_1=\widehat{\widetilde{U}} U$ belong to the center ${\rm Cen}(\cl_{p,q})$, and $x_3 x_4=x_4 x_3$, i.e. $\widehat{U} \widetilde{U}=\widetilde{U}\widehat{U}$ in the case $n=3$. We take $y_4=x_4$.

We obtain $y_k=x_k$, $k=1, 2, 3, 4$ and the following formulas, which are another version of the formulas (\ref{n3}):
\begin{eqnarray}
&&C_{(1)}=
    \widetilde{U}+\widehat{U}+\widehat{\widetilde{U}}+U,\nonumber\\
    &&C_{(2)}=-(
    \widetilde{U} \widehat{U}+\widetilde{U}\widehat{\widetilde{U}}+\widetilde{U} U+ \widehat{U}\widehat{\widetilde{U}}+ \widehat{U}U+\widehat{\widetilde{U}}U),\nonumber\\
   &&C_{(3)}=
   \widetilde{U} \widehat{U}\widehat{\widetilde{U}}+\widetilde{U} \widehat{U}U+\widetilde{U} \widehat{\widetilde{U}}U+\widehat{U}\widehat{\widetilde{U}}U,\nonumber\\
   &&C_{(4)}=-
\widetilde{U} \widehat{U}\widehat{\widetilde{U}}U.\label{n3an}
\end{eqnarray}
Note that we obtain these formulas for the element $U$ with invertible expressions $v_2$, $v_3$, and $v_4$ (for other elements $U$, other sequences $x_1$, $x_2$, $x_3$, $x_4$ can be considered). Also note that not every sequence $y_1$, $y_2$, $y_3$, $y_4$ from $\{\widetilde{U}, \widehat{U}, \widetilde{\widehat{U}}, U\}$ gives the correct Vieta's formulas (see Theorem 3 and Lemma 7 in \cite{det}, the formulas (\ref{n3}) and (\ref{n3an}) are two of several correct forms).

\subsection{The cases $n\geq 4$}\label{secn4}

The generalized Vieta's formulas in the cases $n\geq 4$ are more complicated. We use the additional (triangle) operation (see \cite{det})
\begin{eqnarray}
U^\bigtriangleup :=\sum_{k=0}^n (-1)^{\frac{k(k-1)(k-2)(k-3)}{24}}\langle U \rangle_k=\!\!\!\!\!\sum_{k=0, 1, 2, 3\!\!\mod 8}\!\!\!\!\!\langle U \rangle_k - \!\!\!\!\!\sum_{k=4, 5, 6, 7\!\!\mod 8}\!\!\!\!\!\langle U \rangle_k.
\end{eqnarray}
Note that
\begin{eqnarray}
\Det(U^\bigtriangleup )\neq \Det(U),\qquad \varphi_{U^\bigtriangleup }(\lambda)\neq\varphi_U(\lambda),\qquad\varphi_U(U^\bigtriangleup)\neq 0
\end{eqnarray}
in the general case (compare with the statements of Theorem~\ref{thCH}).

In the case $n=4$, the generalized Vieta's formulas have the following form 
\begin{eqnarray}
 &&C_{(1)}=
    U+\widehat{\widetilde{U}}+\widehat{U}^\bigtriangleup+\widetilde{U}^\bigtriangleup,\nonumber\\
    &&C_{(2)}=-(
    U\widehat{\widetilde{U}}+
    U\widehat{U}^\bigtriangleup+
    U\widetilde{U}^\bigtriangleup+
    \widehat{\widetilde{U}}\widehat{U}^\bigtriangleup+
    \widehat{\widetilde{U}}\widetilde{U}^\bigtriangleup+
    (\widehat{U}\widetilde{U})^\bigtriangleup),\nonumber\\
    &&C_{(3)}=
    U\widehat{\widetilde{U}}\widehat{U}^\bigtriangleup+
    U\widehat{\widetilde{U}}\widetilde{U}^\bigtriangleup+
    U(\widehat{U}\widetilde{U})^\bigtriangleup+
    \widehat{\widetilde{U}}(\widehat{U}\widetilde{U})^\bigtriangleup,\nonumber\\
    &&C_{(4)}=-
    U\widehat{\widetilde{U}}(\widehat{U}\widetilde{U})^\bigtriangleup,\label{n41}
\end{eqnarray}
where the coefficients $C_{(k)}$, $k=1, 2, 3, 4$ are not elementary symmetrical polynomials because of the additional operation of conjugation $\bigtriangleup$. These formulas look like the ordinary Vieta's formulas
\begin{eqnarray}
&&C_{(1)}=
    \lambda_1+\lambda_2+\lambda_3+\lambda_4,\nonumber\\
&&C_{(2)}=-(
    \lambda_1\lambda_2+
    \lambda_1\lambda_3+
    \lambda_1\lambda_4+
    \lambda_2\lambda_3+
    \lambda_2\lambda_4+
    \lambda_3\lambda_4),\nonumber\\
   && C_{(3)}=
    \lambda_1\lambda_2\lambda_3+
    \lambda_1\lambda_2\lambda_4+
    \lambda_1\lambda_3\lambda_4+
    \lambda_2\lambda_3\lambda_4,\nonumber\\
    &&C_{(4)}=-
    \lambda_1\lambda_2\lambda_3\lambda_4,\label{n41V}
\end{eqnarray}
if we ignore the operation $\bigtriangleup$. Note that
$$
(UV)^\bigtriangleup\neq U^\bigtriangleup V^\bigtriangleup, \qquad (UV)^\bigtriangleup\neq V^\bigtriangleup U^\bigtriangleup
$$
in the general case (compare with the properties (\ref{invol})).

The  analogues of the formulas (\ref{n41}) for the cases $n=5, 6$ are presented in \cite{Abd2} (see Theorem 5.1 and Section 8). These formulas also have the form of elementary symmetric polynomials, only if
we ignore the operation $\bigtriangleup$, and can be interpreted as generalized noncommutative Vieta's formulas. These formulas do not follow directly from the Gelfand--Retakh noncommutative Vieta theorem, it is not easy task to guess the ``right'' (generic) ordered set of solutions $x_1$, $x_2$, $x_3$, \ldots $x_N$ of the characteristic equation to obtain the elements $y_1$, $y_2$, $y_3$, \ldots, $y_N$ we need in the generalized Vieta's formulas. This is a task for further research.

We discuss Vieta's formulas in more complicated cases $n\geq 4$ in details using different techniques in Sections \ref{sectGeneral} and \ref{sectBar}.

\section{On basis-free formulas for the determinant}\label{sectBasis}

In \cite{det}, we present a method based on the Faddeev--LeVerrier algorithm that allows us to recursively obtain basis-free formulas for all the characteristic coefficients $C_{(k)}$, $k=1, \ldots, N$ (\ref{char}):
\begin{eqnarray}
\!\!\!\!\!U_{(1)}:=U,\quad U_{(k+1)}=U(U_{(k)}-C_{(k)}),\quad C_{(k)}:=\frac{N}{k}\langle U_{(k)} \rangle_0,\quad k=1, \ldots, N.
\label{FL}\end{eqnarray}
We use this method also in \cite{Abd2,Abd}. In this method, we obtain high coefficients from the lowest ones. The determinant is minus the last coefficient
\begin{eqnarray}
\Det(U)=-C_{(N)}=-U_{(N)}=U(C_{(N-1)}-U_{(N-1)}).\label{laststep}
\end{eqnarray}
Let us consider the operations $\bigtriangleup_1$, \ldots $\bigtriangleup_m$, where $m=[\log_2(n)]+1$ (see \cite{det}):
\begin{eqnarray}
U^{\bigtriangleup_j}:=\sum_{k=0}^n (-1)^{C_k^{2^{j-1}}}\langle U \rangle_k,\qquad j=1, \ldots, m,\qquad C_k^i=\frac{k!}{i!(k-i)!}.\label{opconj}
\end{eqnarray}
In particular cases, we have $U^{\bigtriangleup_1}=\widehat{U}$, $U^{\bigtriangleup_2}=\widetilde{U}$, $U^{\bigtriangleup_3}=U^\bigtriangleup$.

\begin{theorem}\label{thCN} For arbitrary $n\geq 1$, there exists a basis-free formula for $\Det(U)=-C_{(N)}$ of the following form
\begin{eqnarray}
\Det(U)=U (\sum_{j=1}^t \alpha_j H_j(U))=(\sum_{j=1}^t \alpha_j H_j(U)) U,\quad \alpha_j\in{\mathbb R},\quad \sum_{j=1}^t \alpha_j=1,\label{type}
\end{eqnarray}
where each expression $H_j(U)$ involves only multiplication (consists of $N-1$ multipliers) and the operations $\bigtriangleup_1$, \ldots, $\bigtriangleup_m$, $m=[\log_2(n)]+1$.
\end{theorem}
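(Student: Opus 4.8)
The plan is to prove the result by induction on $n$, exploiting the recursive structure of the Faddeev--LeVerrier algorithm (\ref{FL}) together with the known behaviour of the conjugation operations $\bigtriangleup_1,\dots,\bigtriangleup_m$ under the ``doubling'' map that relates $\cl_{p,q}$ with $n=p+q$ to geometric algebras of dimension $n-1$ (or $n-2$). The base cases $n=1,2,3$ are exactly the formulas (\ref{n1}), (\ref{n2}), (\ref{n3an}) established earlier; for instance $\Det(U)=-U\widehat{U}$ for $n=1$, $\Det(U)=-U\widehat{\widetilde U}$ for $n=2$, and $\Det(U)=-\,U\widetilde U\,\widehat U\,\widehat{\widetilde U}$ for $n=3$, each of which is visibly of the form (\ref{type}) with $t=1$, $\alpha_1=1$, and $H_1(U)$ a product of $N-1$ factors built from $U$ and the $\bigtriangleup_j$'s. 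The key point for the inductive step is that $\Det$ on $\cl_{p,q}$ can be expressed through a ``norm-like'' product $N_r(U):=U\,U^{\bigtriangleup_r}\cdots$ (a product of $2^{r}$ conjugates of $U$) which, for a suitable $r$ depending on $n \bmod 8$, lands in a subalgebra isomorphic to $\cl_{p',q'}$ with strictly smaller dimension, and there $\Det$ restricts (up to a power) to the determinant of that smaller algebra. Iterating, after $m=[\log_2 n]+1$ stages one reaches the center $\cl^0_{p,q}\equiv\BR$, and the composite expression is a product of $N-1=2^{[\frac{n+1}{2}]}-1$ conjugates of $U$.

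Concretely, I would proceed as follows. First, record the multiplicativity / grade-shuffling identities for the operations $\bigtriangleup_j$: although $(UV)^{\bigtriangleup_j}\neq U^{\bigtriangleup_j}V^{\bigtriangleup_j}$ in general, one has $\Det(U^{\bigtriangleup_j})$ related to $\Det(U)$ in a controlled way, and more importantly the ``partial norms'' behave well. Second, identify for each residue $n\equiv 0,\dots,7 \pmod 8$ which product $U\,U^{\bigtriangleup_{j_1}}\cdots U^{\bigtriangleup_{j_s}}$ of $2^s$ conjugates is \emph{central-valued} or lands in a proper subalgebra — this is the content that is already visible in the small cases ($\widehat{\widetilde U}U$ central for $n=2$; $\widetilde U\widehat U\widehat{\widetilde U}U$ scalar for $n=3$; the triangle operation $\bigtriangleup$ entering at $n=4$ as in (\ref{n41})). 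Third, set up the recursion: write $\Det(U)=\Det(W)$ up to a known scalar power, where $W$ is the partial-norm product and lies in an algebra of smaller $n$; apply the inductive hypothesis to $W$ to get $\Det(W)=W\bigl(\sum_j\beta_j H_j(W)\bigr)$; then substitute $W$ as a product of conjugates of $U$ and use the fact that applying $\bigtriangleup_i$ to a conjugate of $U$ is again (after a sign bookkeeping) expressible via the $\bigtriangleup$-operations on $U$ — the group generated by $\{\bigtriangleup_1,\dots,\bigtriangleup_m\}$ acting on grades is abelian of order $2^m$, which closes the computation and also explains why no new conjugations beyond $\bigtriangleup_1,\dots,\bigtriangleup_m$ are ever needed. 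Finally, check the multiplier count: the substitution turns a product of $2^{[(n'+1)/2]}-1$ factors in $W$ into a product of $2^{[(n+1)/2]}-1$ factors in $U$ because each $W$-factor is itself a product of $2^{[(n+1)/2]-[(n'+1)/2]}$ conjugates of $U$ — and verify the normalization $\sum_j\alpha_j=1$ by evaluating (\ref{type}) at $U=e$, where $\Det(e)=1$ forces it. The two-sided property $\Det(U)=U(\cdots)=(\cdots)U$ follows since $\Det(U)$ is a scalar and $U$ is invertible precisely when $\Det(U)\neq 0$, so the left cofactor $\sum_j\alpha_j H_j(U)=\Det(U)U^{-1}$ equals the right one on the dense invertible set and hence everywhere by continuity (polynomiality).

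The main obstacle is the inductive step's bookkeeping: one must track exactly which conjugation operations appear after substituting the partial-norm product $W$ back in terms of $U$, and show the result never requires an operation outside $\{\bigtriangleup_1,\dots,\bigtriangleup_m\}$ — equivalently, that the relevant subalgebra isomorphism intertwines the $\bigtriangleup$-operations of the smaller algebra with (compositions of) the $\bigtriangleup$-operations of $\cl_{p,q}$. This is plausible because $\bigtriangleup_j$ depends only on the parity of the binomial coefficient $C_k^{2^{j-1}}$, i.e. on the $(j-1)$-st binary digit of the grade $k$ (Kummer/Lucas), so the operations form an $\mathbb{F}_2$-vector space of dimension $m$ acting diagonally on grades; restricting to the even subalgebra (grade halved) or to a centralizer of a pseudoscalar simply reindexes these digits, which is what must be made precise. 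A secondary, lesser difficulty is the clean statement of "partial norm lands in a smaller subalgebra" uniformly in $p,q$ and in $n\bmod 8$; but since only the dimension $N$ and the grade-action matter for the determinant (not the signature, by Lemma~10 of \cite{det}), this reduces to a finite check over the eight residue classes, with the pattern already exhibited for $n\le 4$.
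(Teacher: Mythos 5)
Your proposal rests on an inductive step that is neither established nor, as stated, true: you need the ``partial norm'' $W=U\,U^{\bigtriangleup_{j_1}}\cdots$ to land in a subalgebra isomorphic to a geometric algebra of strictly smaller dimension on which $\Det$ restricts (up to a power) to the smaller determinant. The fixed-point sets of the operations $\bigtriangleup_j$ are not subalgebras (for $n=6$, say, $U\widetilde U$ is reversion-invariant and so lies in the span of grades $0,1,4,5$, which is not closed under multiplication), and the pattern you extrapolate from $n\le 4$ genuinely breaks at $n=6$: no single product of conjugates computes $\Det(U)$ there (the paper cites a computer verification that $t=1$ is impossible), and the known formula (\ref{n6}) has two terms with weights $\frac13,\frac23$ and nested $\bigtriangleup$'s that your recursion gives no mechanism to produce. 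You flag this bookkeeping as the ``main obstacle'' yourself, but it is the entire content of the theorem, so what you have is a plan rather than a proof. The parts that do go through --- the normalization $\sum_j\alpha_j=1$ via $\Det(e)=1$ and $H_j(e)=1$, and the two-sided form via polynomiality on the dense invertible set --- are the easy parts.

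The theorem admits a much shorter argument, which is the one the paper uses. The Faddeev--LeVerrier recursion (\ref{FL}) already expresses $\Det(U)=-C_{(N)}$ using only multiplication by $U$, real linear combinations, and the single non-algebraic operation $\langle\;\cdot\;\rangle_0$; and $\langle V\rangle_0$ is itself the average $\frac{1}{2^m}(V+V^{\bigtriangleup_1}+\cdots+V^{\bigtriangleup_1\ldots\bigtriangleup_m})$ over the $2^m$ compositions of $\bigtriangleup_1,\ldots,\bigtriangleup_m$ (Theorem 1 of \cite{det}). Substituting this identity at every occurrence of $\langle\;\cdot\;\rangle_0$ and expanding by linearity yields a finite sum $U\sum_j\alpha_jH_j(U)$ of the required shape; each $H_j$ has exactly $N-1$ factors by the homogeneity $\Det(\lambda U)=\lambda^N\Det(U)$, and $\sum_j\alpha_j=1$ by evaluating at $U=e$. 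No induction on $n$ and no subalgebra reduction is needed.
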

Note that the expression $$\Adj(U):=\sum_{j=1}^t \alpha_j H_j(U)$$ in (\ref{type}) is the {\it adjugate} of $U$ and it is used to calculate the inverse $$U^{-1}=\frac{\Adj(U)}{\Det(U)}$$ in the case $\Det(U)\neq 0$.

\begin{proof}
The determinant can be calculated using the Faddeev--LeVerrier algorithm (\ref{FL}), so it involves only multiplication, summation, and the operation $\langle\quad \rangle_0$ of projection onto the subspace of grade $0$. The operation $\langle\quad\rangle_0$ can be realized using $m$ operations $\bigtriangleup_1$, \ldots $\bigtriangleup_m$ (see Theorem 1 in \cite{det}):
$$
\langle U \rangle_0=\frac{1}{2^m}(U+U^{\bigtriangleup_1}+U^{\bigtriangleup_2}+\cdots+U^{\bigtriangleup_1\ldots \bigtriangleup_m}).
$$
The common multiplier $U$ is in (\ref{type}) because of the last step (\ref{laststep}) of the Faddeev--LeVerrier algorithm. Each term $H_j(U)$ consists of $N-1$ multipliers because of the property (see \cite{det}) 
$$\Det(\lambda U)=\lambda^N \Det(U).$$
We have $\sum_{j=1}^t \alpha_j=1$, because we have $$\Det(e)=1$$ for the identity element $e$ by the definition (we associate the identity matrix with the identity element, see \cite{det}) and all the operations (\ref{opconj}) do not change the sign of grade $0$: $H_j(e)=1$, $j=1, \ldots, t$. \qed
\end{proof}

\begin{example} In the cases $n\leq 4$, we have the formulas (\ref{n1}), (\ref{n2}), (\ref{n3}), (\ref{n41}) of type (\ref{type}) with $t=1$. In the case $n=5$, we have
\begin{eqnarray}
\Det(U)=U\widehat{\widetilde{U}}\widehat{U}\widetilde{U}(\widehat{U}\widetilde{U}U\widehat{\widetilde{U}})^\bigtriangleup,\qquad t=1.\label{n5}
\end{eqnarray}
In the case $n=6$, we have
\begin{eqnarray}
\!\!\!\!\!\!\!\Det(U)=\frac{1}{3}U\widetilde{U}\widehat{U}\widehat{\widetilde{U}}(\widehat{U}\widehat{\widetilde{U}} U \widetilde{U})^\bigtriangleup+\frac{2}{3}U\widetilde{U}((\widehat{U}\widehat{\widetilde{U}})^\bigtriangleup((\widehat{U}\widehat{\widetilde{U}})^\bigtriangleup (U \widetilde{U})^\bigtriangleup)^\bigtriangleup)^\bigtriangleup,\,\,\,\, t=2.\label{n6}
\end{eqnarray}
It is proved that the formula of type (\ref{type}) with $t=1$ (see Theorem \ref{thCN}) does not exist in the case $n=6$ using computer calculations in \cite{acus}.
\end{example}

We call the formula (\ref{type}) for $\Det(U)$  (see Theorem \ref{thCN}) {\it simple}, if the parameter $t$ is minimal. For the cases $n\geq 7$, the problem of finding an explicit formula for $\Det(U)$ of type (\ref{type}) with minimal $t$ (as well as finding the value of minimal $t$) is open.

\section{Generalized Vieta theorem for arbitrary $n$}\label{sectGeneral}

In this section, we present a method to obtain basis-free formulas for all characteristic polynomial coefficients, if we know the basis-free formula for the last characteristic polynomial coefficient $C_{(N)}=-\Det(U)$. In this method, we get low coefficients from the highest one (in contrast to the method (\ref{FL}) based on the Faddeev--LeVerrier algorithm, where we get high coefficients from low ones). We call Theorem \ref{thGeneral} {\it generalized Vieta theorem in Clifford geometric algebras}. 

\begin{example} Let us start with an example $n=4$. Let us look at the formula for the last characteristic polynomial coefficient $\Det(U)=-C_{(4)}$ (\ref{n41}):
$$\Det(U)=-C_{(4)}=
    U\widehat{\widetilde{U}}(\widehat{U}\widetilde{U})^\bigtriangleup$$
and formally change the expression $(\widehat{U} \widetilde{U})^\bigtriangleup$ to $\widehat{U}^\bigtriangleup \widetilde{U}^\bigtriangleup$ (which is not the same). Let us write down symmetric polynomials for the resulting expression:
\begin{eqnarray}
 &&U+\widehat{\widetilde{U}}+\widehat{U}^\bigtriangleup+\widetilde{U}^\bigtriangleup,\qquad 
    U\widehat{\widetilde{U}}+
    U\widehat{U}^\bigtriangleup+
    U\widetilde{U}^\bigtriangleup+
    \widehat{\widetilde{U}}\widehat{U}^\bigtriangleup+
    \widehat{\widetilde{U}}\widetilde{U}^\bigtriangleup+
    \widehat{U}^\bigtriangleup\widetilde{U}^\bigtriangleup,\nonumber\\
    &&
    U\widehat{\widetilde{U}}\widehat{U}^\bigtriangleup+
    U\widehat{\widetilde{U}}\widetilde{U}^\bigtriangleup+
    U\widehat{U}^\bigtriangleup\widetilde{U}^\bigtriangleup+
    \widehat{\widetilde{U}}\widehat{U}^\bigtriangleup\widetilde{U}^\bigtriangleup,\qquad 
    U\widehat{\widetilde{U}}\widehat{U}^\bigtriangleup\widetilde{U}^\bigtriangleup,
\end{eqnarray}
Let us formally change back $\widehat{U}^\bigtriangleup \widetilde{U}^\bigtriangleup$ to $(\widehat{U} \widetilde{U})^\bigtriangleup$ in these formulas. We obtain the right formulas (\ref{n41}) for $C_{(1)}$, $-C_{(2)}$, $C_{(3)}$, and $-C_{(4)}$.

The proof for this example is rather simple. We have
\begin{eqnarray}
\varphi_U(\lambda)&=&\Det(\lambda e-U)=(\lambda e-U)(\lambda e-U)\widehat{\widetilde{\quad}}((\lambda e-U)\widehat{\quad}(\lambda e-U)\widetilde{\quad})^\bigtriangleup\label{proof1}\\
&=&(\lambda e-U)(\lambda e-\widehat{\widetilde{U}})((\lambda e-\widehat{U})(\lambda e-\widetilde{U}))^\bigtriangleup\\
&=&(\lambda e-U)(\lambda e-\widehat{\widetilde{U}})(\lambda^2 e - \lambda \widehat{U} - \lambda \widetilde{U}+\widehat{U} \widetilde{U})^\bigtriangleup\\
&=&(\lambda e-U)(\lambda e-\widehat{\widetilde{U}})(\lambda^2 e - \lambda \widehat{U}^\bigtriangleup - \lambda \widetilde{U}^\bigtriangleup+(\widehat{U} \widetilde{U})^\bigtriangleup)\label{proof3}\\
&=&\lambda^4-(U+\widehat{\widetilde{U}}+\widehat{U}^\bigtriangleup+\widetilde{U}^\bigtriangleup)\lambda^3+(
    U\widehat{\widetilde{U}}+
    U\widehat{U}^\bigtriangleup+
    U\widetilde{U}^\bigtriangleup+
    \widehat{\widetilde{U}}\widehat{U}^\bigtriangleup\nonumber\\
    &+&
    \widehat{\widetilde{U}}\widetilde{U}^\bigtriangleup+
    (\widehat{U}\widetilde{U})^\bigtriangleup)\lambda^2-( U\widehat{\widetilde{U}}\widehat{U}^\bigtriangleup+
    U\widehat{\widetilde{U}}\widetilde{U}^\bigtriangleup+
    U(\widehat{U}\widetilde{U})^\bigtriangleup\nonumber\\
    &+&
    \widehat{\widetilde{U}}(\widehat{U}\widetilde{U})^\bigtriangleup)\lambda+U\widehat{\widetilde{U}}(\widehat{U}\widetilde{U})^\bigtriangleup.\label{proof4}
\end{eqnarray}
Note that if we formally change $(\widehat{U} \widetilde{U})^\bigtriangleup$ to $\widehat{U}^\bigtriangleup \widetilde{U}^\bigtriangleup$ (which is not the same) in (\ref{proof3}), then the whole expression will be
$(\lambda e-U)(\lambda e-\widehat{\widetilde{U}})(\lambda e-\widehat{U}^\bigtriangleup)(\lambda e-\widetilde{U}^\bigtriangleup)$ and the expression (\ref{proof4}) will have coefficients with elementary symmetric polynomials. We can not do that because $(\widehat{U} \widetilde{U})^\bigtriangleup\neq\widehat{U}^\bigtriangleup \widetilde{U}^\bigtriangleup$, so in (\ref{proof4}) we have $(\widehat{U} \widetilde{U})^\bigtriangleup$ instead of $\widehat{U}^\bigtriangleup \widetilde{U}^\bigtriangleup$ and do not have elementary symmetric polynomials.
\end{example} 

\begin{note} Note that we can also use the standard method of computing the coefficients for any polynomial. All $C_{(k)}$ coefficients for $k < N$ can be computed recursively by differentiating the determinant ${\rm Det}(\lambda e - U)$  with respect to $\lambda$ and equating $\lambda$ to zero:
    \begin{eqnarray*}
        &&D_{(N)} (\lambda) := -\varphi_U(\lambda)=-{\rm Det}(\lambda e -U), \quad U\in\cl_{p,q}, \quad\lambda\in\mathbb{R},\\
        &&D_{(k-1)}(\lambda):=\frac{1}{N-(k-1)}\frac{ \partial D_{(k)}(\lambda)}{ \partial\lambda},
        \quad  k=N,\ldots,1,\\
        &&C_{(k)} = D_{(k)}(0).
    \end{eqnarray*}
We can calculate the coefficients directly using the formulas
 $$D_{(k)}(\lambda)=\frac{1}{(N-k)!}\frac{ \partial^{N-k} D_{(N)}(\lambda)}{ \partial\lambda^{N-k}}.$$
This method gives the same result as the method discussed above. This method is implicit (and, of course, is good in computer calculations).
The previous method is explicit and closer to Vieta theorem in the sense that looking at the formula (\ref{n41}) for $C_{(4)}$ and removing unnecessary multipliers, we immediately get the formulas for $C_{(3)}$, $C_{(2)}$ and $C_{(1)}$, as we do in the ordinary Vieta theorem (\ref{n41V}), where we deal with elementary symmetric polynomials.
\end{note}

Now let us consider the case of arbitrary $n$.

\begin{theorem}\label{thGeneral}
Let us consider an arbitrary element $U\in\cl_{p,q}$, $n=p+q$, $N=2^{[\frac{n+1}{2}]}$ with the explicit formula for $\Det(U)=-C_{(N)}$ of the type (\ref{type}). Let us formally change all expressions $(U_1 \cdots U_j)^{\bigtriangleup_i}$ for $i=3, \ldots, m$ to $U_1^{\bigtriangleup_i} \cdots U_j^{\bigtriangleup_i}$ in each $H_j(U)$ separately. Let us write down elementary symmetric polynomials for these expressions separately. Let us formally change back previously considered expressions $U_1^{\bigtriangleup_i} \cdots U_j^{\bigtriangleup_i}$ for $i=3, \ldots, m$ to $(U_1 \cdots U_j)^{\bigtriangleup_i}$ in these formulas. We obtain the right formulas for the coefficients $(-1)^{k+1} C_{(k)}$, $k=1, \ldots, N-1$, which we call the generalized Vieta's formulas in Clifford geometric algebras.
\end{theorem}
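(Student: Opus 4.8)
The plan is to deduce the theorem from the single identity furnished by Theorem~\ref{thCN} together with an elementary bookkeeping lemma that tracks how the substitution $U\mapsto\lambda e-U$ propagates through a product-and-conjugation expression. First I would fix a formula of type (\ref{type}), $\Det(U)=\sum_{j=1}^{t}\alpha_j\,U\,H_j(U)$ with $\sum_j\alpha_j=1$, and set $G_j(U):=U\,H_j(U)$: this is a ``monomial'' assembled from exactly $N$ copies of $U$ by multiplication and the conjugations $\bigtriangleup_1,\dots,\bigtriangleup_m$. Since (\ref{type}) holds for every element of $\cl_{p,q}$, it holds with $\lambda e-U$ in place of $U$, so as polynomials in $\lambda$
$$\varphi_U(\lambda)=\Det(\lambda e-U)=\sum_{j=1}^{t}\alpha_j\,G_j(\lambda e-U).$$
Everything then reduces to computing the coefficient of $\lambda^{N-k}$ on the right-hand side and comparing with (\ref{char}).

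The key lemma, which I would prove by structural induction on a monomial $E$ built from a single variable by products and conjugations $\bigtriangleup_i$ and containing $L$ variable-slots, states that
$$E(\lambda e-U)=\sum_{k=0}^{L}(-1)^{k}\lambda^{L-k}\,\sigma_k(E),$$
where $\sigma_0(E)=e$, $\sigma_L(E)=E(U)$, $\sigma_k(E_1E_2)=\sum_{a+b=k}\sigma_a(E_1)\sigma_b(E_2)$, and $\sigma_k(E_0^{\bigtriangleup_i})=(\sigma_k(E_0))^{\bigtriangleup_i}$. The base case is $\lambda e-U$ itself; the product case is multiplication of two polynomials in $\lambda$; the conjugation case uses that each $\bigtriangleup_i$ is $\BR$-linear and acts as the identity on grade $0$, hence fixes every scalar $\lambda^{r}$ and commutes with ``extract the coefficient of $\lambda^{r}$''. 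Applying this with $E=G_j$, $L=N$, weighting by $\alpha_j$ and using $\sum_j\alpha_j=1$ for the leading coefficient, the coefficient of $\lambda^{N-k}$ in $\varphi_U(\lambda)$ equals $(-1)^{k}\sum_{j}\alpha_j\,\sigma_k(G_j)$; comparison with $\varphi_U(\lambda)=\lambda^{N}-C_{(1)}\lambda^{N-1}-\dots-C_{(N)}$ from (\ref{char}) gives $\sum_j\alpha_j\,\sigma_k(G_j)=(-1)^{k+1}C_{(k)}$ for $k=1,\dots,N$ (for $k=N$ this merely restates the starting identity, as $N$ is even), which is the asserted formula.

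It remains to identify $\sum_j\alpha_j\,\sigma_k(G_j)$ with the expression the recipe produces. Here I would first push $\bigtriangleup_1$ and $\bigtriangleup_2$ honestly onto the individual copies of $U$ using that they are an automorphism and an anti-automorphism (reversing the order of the affected subproducts wherever $\bigtriangleup_2$ requires it); then only the operations $\bigtriangleup_i$ with $i\ge3$ remain over genuine products, and the formal change $(U_1\cdots U_j)^{\bigtriangleup_i}\mapsto U_1^{\bigtriangleup_i}\cdots U_j^{\bigtriangleup_i}$ turns each $G_j$ into a genuine product $X_1\cdots X_N$ of atoms of the form $U^{w}$, in which every $\bigtriangleup_i$-node of the expression tree of $H_j$ marks a contiguous, nested block of atom positions. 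One then checks, by induction on this tree, that the ordered elementary symmetric polynomial $\sum_{i_1<\dots<i_k}X_{i_1}\cdots X_{i_k}$ of a concatenation decomposes as $\sum_{a+b=k}$ of the two factors (reproducing the product rule for $\sigma_k$), while re-collecting a block of atoms that all carry an outer $\bigtriangleup_i$ inherited from one common node reproduces $\sigma_k(E_0^{\bigtriangleup_i})=(\sigma_k(E_0))^{\bigtriangleup_i}$, because $(\cdot)^{\bigtriangleup_i}$ is additive and $(Y_1\cdots Y_a)^{\bigtriangleup_i}$ is precisely the re-collection of the split monomial $Y_1^{\bigtriangleup_i}\cdots Y_a^{\bigtriangleup_i}$. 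I expect this last identification to be the main obstacle: the reduction through Theorem~\ref{thCN}, the polynomial-expansion lemma, and the coefficient comparison are routine, but making the informal ``distribute / form symmetric polynomials / collect back'' recipe precise---introducing the tree and bracketing data attached to $H_j$ and a monomial-wise re-collection map, and verifying that it is inverse to the distribution step and hence computes the recursively defined $\sigma_k$---requires genuine care, as does the order reversal caused by $\bigtriangleup_2$ in the general case.
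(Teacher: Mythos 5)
Your proposal is correct and follows essentially the same route as the paper's proof: substitute $\lambda e-U$ into the formula of type (\ref{type}) for the determinant, push the conjugations inside the brackets using their $\BR$-linearity and the fact that they fix the identity element $e$, expand, and compare coefficients of $\lambda^{N-k}$ with (\ref{char}). Your structural-induction lemma with the recursively defined $\sigma_k$ is simply a more formal rendering of the paper's ``drag the operations inside, expand all the brackets'' argument, and your closing identification of $\sigma_k$ with the distribute/symmetrize/re-collect recipe makes explicit a step the paper leaves implicit.
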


\begin{proof} The proof is analogous to the example $n=4$ above (see (\ref{proof1}) -- (\ref{proof4})). We use the existing formula of type (\ref{type}) for the characteristic polynomial $\varphi_U(\lambda)=\Det(\lambda e-U)$. Then we drag the operations $\widehat{\quad}$, $\widetilde{\quad}$ inside the brackets $(\lambda e-U)$. These operations are linear and do not change the identity element $e$. We expand the brackets inside expressions with the operations $\bigtriangleup_3$, \ldots, $\bigtriangleup_m$. Then we drag the operations $\bigtriangleup_3$, \ldots, $\bigtriangleup_m$ inside the brackets and use linearity of these operations and that they do not change the identity element $e$. Then we expand all the brackets and get what we need. \qed
\end{proof}

\begin{note}\label{note2} We can reformulate Theorem \ref{thGeneral} in the following way. Let us consider an arbitrary element $U\in\cl_{p,q}$, $n=p+q$, $N=2^{[\frac{n+1}{2}]}$ with the explicit formula for $C_{(N)}=-\Det(U)$ of type (\ref{type}). Then all other characteristic polynomial coefficients are
    \begin{eqnarray}
        C_{(k)}=(-1)^{k+1}\sum_{X_l\in X(k)}{\rm F}(X_l),\quad
        k = 1,2,\ldots,N,\label{cross<5}
    \end{eqnarray}
    where ${\rm F}$ is $N$-variable function obtained from (\ref{type}) by enumerating every occurrence of the element $U$ from left to right (in each of $t$ terms respectively), and $X(k)$ is the set of all possible tuples with $k$ elements $U$ and $N-k$ identity elements $e$:
    \begin{eqnarray}
        \!\!\!\!\!X(k)=\left\lbrace X_l=(x_1,x_2,\ldots,x_N) \, | \,
                    x_i\in\lbrace e,U\rbrace, \quad \sum_{i=1}^N x_i=kU+(N-k)e
                \right\rbrace.\label{Xk}
    \end{eqnarray}
This statement was verified using computer calculations for the cases $n\leq 6$ in \cite{Abd2}. Now we get this statement for the case of arbitrary dimension $n$, because it is just reformulation of Theorem \ref{thGeneral} in other terms. Substituting $e$ into (\ref{Xk}) is the same as removing  unnecessary multipliers because the operations $\bigtriangleup_1$, \ldots, $\bigtriangleup_m$ do not change the identity element $e$.

In the cases of small dimensions, we have
  \begin{eqnarray}
        n=1,&&
        {\rm F}(x_1,x_2)=x_1\widehat{x_2}\in\cl_{p,q},
        \quad x_1,x_2\in\cl_{p,q};\nonumber\\
        n=2,&&
        {\rm F}(x_1,x_2)=x_1\widehat{\widetilde{x_2}}\in\cl_{p,q},
        \quad x_1,x_2\in\cl_{p,q};\nonumber\\
        n=3,&&
        {\rm F}(x_1,x_2,x_3,x_4)=x_1\widehat{x_2}\widetilde{x_3}\widehat{\widetilde{x_4}}\in\cl_{p,q},
        \quad x_1,x_2,x_3,x_4\in\cl_{p,q};\nonumber\\
        n=4,&&
        {\rm F}(x_1,x_2,x_3,x_4)=x_1\widehat{\widetilde{x_2}}(\widehat{x_3}\widetilde{x_4})^\bigtriangleup\in\cl_{p,q},
        \quad x_1,x_2,x_3,x_4\in\cl_{p,q};\nonumber\\
        n=5,&&
        {\rm F}(x_1,x_2,\ldots,x_8)=
        x_1\widehat{\widetilde{x_2}}\widehat{x_3}\widetilde{x_4}(\widehat{x_5}\widetilde{x_6}x_7\widehat{\widetilde{x_8}})^\bigtriangleup\in\cl_{p,q},\nonumber\\
        && x_1,x_2,\ldots,x_8\in\cl_{p,q};\nonumber\\
        n=6,&&
        {\rm F}(x_1,x_2,\ldots,x_8)=
        \frac{1}{3}x_1\widetilde{x_2}\widehat{x_3}\widehat{\widetilde{x_4}}(\widehat{x_5}\widehat{\widetilde{x_6}} x_7 \widetilde{x_8})^\bigtriangleup\nonumber\\
        &&+\frac{2}{3}x_1\widetilde{x_2}((\widehat{x_3}\widehat{\widetilde{x_4}})^\bigtriangleup((\widehat{x_5}\widehat{\widetilde{x_6}})^\bigtriangleup (x_7 \widetilde{x_8})^\bigtriangleup)^\bigtriangleup)^\bigtriangleup,\qquad x_1,x_2,\ldots,x_8\in\cl_{p,q}.\nonumber
    \end{eqnarray}
\end{note}

\begin{example}
In the cases $n=5, 6$, we can use the procedure from Theorem \ref{thGeneral} (or, equivavalently, from Note \ref{note2}) for the formulas (\ref{n5}), (\ref{n6}) and obtain basis-free formulas (generalized Vieta's formulas) for $C_{(2)}$, \ldots, $C_{(7)}$. These formulas are presented in \cite{Abd2}.
For example, in the case $n=6$ we get from (\ref{n6}):
\begin{eqnarray}
C_{(2)}&=&-((UU^{\bigtriangleup}+
            U\widetilde{U}+
            (\widehat{U}\widehat{\widetilde{U}})^{\bigtriangleup}+
            \widetilde{U}\widehat{\widetilde{U}}+
            \widehat{U}\widehat{\widetilde{U}}+
            U\widehat{U}^{\bigtriangleup} +\widetilde{U}\widehat{U}^{\bigtriangleup}+
            U\widehat{\widetilde{U}}+
            \widetilde{U}\widetilde{U}^{\bigtriangleup}
            \nonumber\\
          && +
            (U\widetilde{U})^{\bigtriangleup}+ U\widehat{U}+
            \widetilde{U}\widehat{\widetilde{U}}^{\bigtriangleup}+
            \widetilde{U}\widehat{U}+
            U\widetilde{U}^{\bigtriangleup}+
            U\widehat{\widetilde{U}}^{\bigtriangleup}+
            \widetilde{U}U^{\bigtriangleup})+
            \frac{1}{3}(
            (\widehat{\widetilde{U}}\widetilde{U})^{\bigtriangleup}
            \nonumber\\
        &&+(\widehat{\widetilde{U}}U)^{\bigtriangleup}+(\widehat{U}\widetilde{U})^{\bigtriangleup}+
            (\widehat{U}U)^{\bigtriangleup}+
            \widehat{\widetilde{U}}\widetilde{U}^{\bigtriangleup}+
            \widehat{\widetilde{U}}U^{\bigtriangleup}
            \nonumber +
            \widehat{\widetilde{U}}\widehat{\widetilde{U}}^{\bigtriangleup}+
            \widehat{\widetilde{U}}\widehat{U}^{\bigtriangleup}+
            \widehat{U}\widetilde{U}^{\bigtriangleup}\\
    &&+
            \widehat{U}U^{\bigtriangleup}  +
            \widehat{U}\widehat{\widetilde{U}}^{\bigtriangleup} +\widehat{U}\widehat{U}^{\bigtriangleup})
            +\frac{2}{3}( \widehat{\widetilde{U}}^{\bigtriangleup}\widetilde{U}^{\bigtriangleup}+\widehat{\widetilde{U}}^{\bigtriangleup}U^{\bigtriangleup}+
            \widehat{U}^{\bigtriangleup}\widetilde{U}^{\bigtriangleup}+
            \widehat{U}^{\bigtriangleup}U^{\bigtriangleup} \nonumber\\
      &&+
            (\widehat{\widetilde{U}}^{\bigtriangleup}\widetilde{U})^{\bigtriangleup}+
            (\widehat{\widetilde{U}}^{\bigtriangleup}U)^{\bigtriangleup}+(\widehat{\widetilde{U}}^{\bigtriangleup}\widehat{\widetilde{U}})^{\bigtriangleup} +(\widehat{\widetilde{U}}^{\bigtriangleup}\widehat{U})^{\bigtriangleup}+
            (\widehat{U}^{\bigtriangleup}\widetilde{U})^{\bigtriangleup}+
            (\widehat{U}^{\bigtriangleup}U)^{\bigtriangleup}\nonumber\\
            &&+
            (\widehat{U}^{\bigtriangleup}\widehat{\widetilde{U}})^{\bigtriangleup}+
            (\widehat{U}^{\bigtriangleup}\widehat{U})^{\bigtriangleup})).\nonumber
    \end{eqnarray} 
\end{example}

 \section{On alternative approaches with bar-operation}\label{sectBar}

The other three approaches (see discussions in \cite{det} and Table 1 in \cite{acus2}) to the problem of finding optimized formulas for the determinant and other characteristic polynomial coefficients in geometric algebras is to use instead of the operations $\bigtriangleup_1$, \ldots, $\bigtriangleup_m$:
\begin{enumerate}
\item one operation (we call it {\em bar-operation})
$$\overline{U}:=2\langle U \rangle_0-U;$$
\item two operations $\overline{U}$, $\widetilde{U}$ together;\\
\item three operations $\overline{U}$, $\widetilde{U}$, $\widehat{U}$ together.
\end{enumerate}
Let us introduce the parameters $t_1$, $t_2$, and $t_3$, which are equal to the minimum number of terms in these three cases. We have $t_1\geq t_2 \geq t_3 \geq 1$.

We obtain the following analogue of Theorem \ref{thCN}.

\begin{theorem}\label{th5}  For arbitrary $n\geq 1$, there exists basis-free formulas for $\Det(U)=-C_{(N)}$ of the following three types ($r=1, 2, 3$):
\begin{eqnarray}
\!\!\!\!\!\!\!\Det(U)=U (\sum_{j=1}^{t_r} \alpha_{jr} H_{jr}(U))=(\sum_{j=1}^{t_r} \alpha_{jr} H_{jr}(U)) U,\quad \alpha_{jr}\in{\mathbb R},\quad \sum_{j=1}^{t_r} \alpha_{jr}=1,\label{type123}
\end{eqnarray}
where each expression $H_{jr}(U)$ involves only multiplication (consists of $N-1$ multipliers) and:
\begin{enumerate}
\item the operation $\overline{U}$ in the case $r=1$;
\item the operations $\overline{U}$, $\widetilde{U}$ in the case $r=2$;
\item the operations $\overline{U}$, $\widetilde{U}$, $\widehat{U}$ in the case $r=3$.
\end{enumerate}
\end{theorem}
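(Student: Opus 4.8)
The plan is to adapt the proof of Theorem~\ref{thCN} almost verbatim; the only change is in how the grade-$0$ projection is realized. First I would recall that, by the Faddeev--LeVerrier recursion (\ref{FL}), the determinant
$$\Det(U)=-C_{(N)}=U\bigl(C_{(N-1)}-U_{(N-1)}\bigr)$$
is obtained from $U$ using only multiplication, addition, multiplication by real scalars, and the projection $\langle\;\rangle_0$ onto grade $0$. The key observation is that, by the very definition $\overline{U}:=2\langle U\rangle_0-U$,
$$\langle U\rangle_0=\frac{1}{2}\bigl(U+\overline{U}\bigr),$$
so that $\langle\;\rangle_0$ is realized by a single application of the bar-operation. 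Substituting this identity for every occurrence of $\langle\;\rangle_0$ in the Faddeev--LeVerrier computation yields an expression for $\Det(U)$ built from $U$ using only multiplication, addition, real scalars, and the operation $U\mapsto\overline{U}$.

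Next I would bring this expression into the form (\ref{type123}). Since $U\mapsto\overline{U}$ is $\BR$-linear and fixes the identity ($\overline{e}=2\langle e\rangle_0-e=e$), I would push it through all sums and scalar factors and expand all products of sums, obtaining a representation $\Det(U)=U\sum_{j}\alpha_j H_j(U)$ in which each $H_j(U)$ is a nested product of copies of $U$ with the bar-operation applied at various levels and no internal sums. The common left factor $U$ comes, as in Theorem~\ref{thCN}, from the last step of the recursion. The count of multipliers follows from $\Det(\lambda U)=\lambda^N\Det(U)$ together with $\overline{\lambda U}=\lambda\overline{U}$ and the additivity of the $U$-degree under multiplication: every term $UH_j(U)$ is homogeneous of degree $N$ in $U$, so each $H_j(U)$ has exactly $N-1$ multipliers. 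The normalization $\sum_j\alpha_j=1$ follows by evaluating at $U=e$: since the bar-operation fixes $e$, each $H_j(e)=e\equiv 1$, whence $1=\Det(e)=\sum_j\alpha_j$. Finally, because $\Det(U)$ is a scalar, $U\,\Adj(U)=\Adj(U)\,U=\Det(U)$ with $\Adj(U)=\sum_j\alpha_j H_j(U)$, so both factored forms in (\ref{type123}) hold. This proves the statement for $r=1$. For $r=2$ and $r=3$ the admissible sets of operations contain the bar-operation, so the very same formula is simultaneously of types $r=1,2,3$; existence therefore holds in all three cases, and moreover $t_1\geq t_2\geq t_3$, since any formula admissible for a smaller $r$ is admissible for a larger one, while $t_3\geq 1$ because a formula with no terms would force $\Det(U)\equiv 0$.

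I do not expect a serious obstacle in the existence part: once the identity $\langle U\rangle_0=\tfrac{1}{2}(U+\overline{U})$ is noted, the argument is a direct (and in fact simpler) analogue of the proof of Theorem~\ref{thCN}, a single application of the bar-operation replacing the $2^m$-term average over $\bigtriangleup_1,\ldots,\bigtriangleup_m$. The one point that needs care is the bookkeeping in the normalization step, i.e.\ verifying that pushing the bar-operation through sums and scalars and expanding all products of sums terminates and preserves $U$-homogeneity, so that the ``$N-1$ multipliers'' count is exact. What this construction does \emph{not} resolve --- and what is left open, exactly as for Theorem~\ref{thCN} and formula (\ref{type}) --- is the determination of the minimal numbers $t_1,t_2,t_3$ of terms, and in particular whether adjoining the operations $\widetilde{\;}$ and $\widehat{\;}$ strictly decreases the term count for $n\geq 7$.
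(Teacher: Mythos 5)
Your proposal is correct and follows essentially the same route as the paper: the paper's proof likewise adapts the proof of Theorem~\ref{thCN}, using the Faddeev--LeVerrier algorithm (\ref{FL}) together with the identity $\langle U \rangle_0=\frac{1}{2}(U+\overline{U})$ in place of the $2^m$-term average over $\bigtriangleup_1,\ldots,\bigtriangleup_m$, and disposes of the cases $r=2,3$ by noting that admitting $\widetilde{U}$ and $\widehat{U}$ can only reduce the number of terms ($t_1\geq t_2\geq t_3$). Your additional bookkeeping on homogeneity, the normalization $\sum_j\alpha_{jr}=1$, and the multiplier count is a faithful elaboration of the same argument.
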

\begin{proof}  The proof in all three cases is analogous to the proof of Theorem \ref{thCN}. We use the Faddeev--LeVerrier algorithm (\ref{FL}) and the formula
$$
\langle U \rangle_0=\frac{1}{2}(U+\overline{U}).
$$
Using the classical operations $\widetilde{U}$ and  $\widehat{U}$ sometimes simplifies (the number of terms can be less $t_1\geq t_2 \geq t_3$) the formulas that contain only the operation~$\overline{U}$. \qed
\end{proof}

Note that the formulas for the determinant using only one operation $\overline{U}$ (for $r=1$ in Theorem \ref{th5}) must coincide in the cases $n=2j-1$ and $n=2j$ for all $j\geq 1$ due to the Faddeev--LeVerrier algorithm, see~\cite{det}.

\medskip

We have the following known formulas for the determinant in the cases $n\leq 6$:
\begin{eqnarray}
n=1,\qquad &&\Det(U)=U\widetilde{U}=U \overline{U},\\
n=2,\qquad &&\Det(U)=U\widehat{\widetilde{U}} =U\overline{U},\\
n=3,\qquad &&\Det(U)=U\widehat{U}\widetilde{U}\widehat{\widetilde{U}}=\frac{1}{3}U U \overline{UU}+\frac{2}{3} U \overline{ \overline{U}\, \overline{\overline{U}\, \overline{U}}}=U\widetilde{U} \overline{U \widetilde{U}},\\
n=4,\qquad &&\Det(U)=U\widehat{\widetilde{U}}(\widehat{U}\widetilde{U})^\bigtriangleup=\frac{1}{3}U U \overline{UU}+\frac{2}{3} U \overline{ \overline{U}\, \overline{\overline{U}\, \overline{U}}}=U\widetilde{U} \overline{U \widetilde{U}},\\
n=5,\qquad &&\Det(U)=U\widehat{\widetilde{U}}\widehat{U}\widetilde{U}(\widehat{U}\widetilde{U}U\widehat{\widetilde{U}})^\bigtriangleup=\frac{1}{3}H H \overline{H H}+\frac{2}{3} H \overline{ \overline{H}\, \overline{\overline{H}\, \overline{H}}}\\
&&=J \widehat{J} \overline{J \widehat{J}},\\
n=6,\qquad &&\Det(U)=\frac{1}{3}H\widehat{H}(\widehat{H} H)^\bigtriangleup+\frac{2}{3}H(\widehat{H}^\bigtriangleup(\widehat{H}^\bigtriangleup H^\bigtriangleup)^\bigtriangleup)^\bigtriangleup\\
&&=\frac{1}{3}H H \overline{H H}+\frac{2}{3} H \overline{ \overline{H}\, \overline{\overline{H}\, \overline{H}}},
\end{eqnarray}
where we use the notation $J:=U \widehat{\widetilde{U}}$, $H:=U\widetilde{U}$. 

We obtain the following Table~1 with known minimal values of the parameters $t$, $t_1$, $t_2$, $t_3$ (for the parameter $t$, see Theorem \ref{thCN}). We have $t_3=2$ ($t_3\neq 1$) in the case $n=6$ according to the results of the paper \cite{acus}, where grade-negation operations of different types are used. 

The question mark stands at those places where the minimum values of the corresponding parameters are unknown. An open problem is to find optimized formulas with the parameter $t_1\geq 2$ in the cases $n=5, 6$ (these two formulas must coincide, see above). An open problem is to find optimized formulas of all four types with the parameters $t$, $t_1$, $t_2$, $t_3$ for the determinant in geometric algebras in the case of high dimensions $n\geq 7$.

\begin{table}[ht]\begin{center}
\begin{tabular}{|c|c|c|c|c|c|c|c|} \hline
$\quad n \quad$   & $\quad 1\quad$ & $\quad2\quad$ & $\quad3\quad$ & $\quad4\quad$ & $\quad5\quad$ & $\quad6\quad$\\  \hline 
$t$   & 1 & 1 & 1 & 1 & 1 & 2 \\ \hline
$t_1$ & 1 & 1 & 2 & 2 & ? & ? \\ \hline
$t_2$ & 1 & 1 & 1 & 1 & 2 & 2 \\ \hline
$t_3$ & 1 & 1 & 1 & 1 & 1 & 2 \\ \hline
\end{tabular}\end{center}
\label{table1}\caption{The values of the parameters $t$, $t_1$, $t_2$, $t_3$.}
\end{table}

We get the following analogue of Theorem \ref{thGeneral} (generalized Vieta theorem) for all three types of formulas.

\begin{theorem}\label{th6} Let us consider an arbitrary element $U\in\cl_{p,q}$, $n=p+q$, $N=2^{[\frac{n+1}{2}]}$ with the explicit formula for $\Det(U)=-C_{(N)}$ of the types (\ref{type123}). Let us formally change all expressions $\overline{U_1 \cdots U_j}$ to $\overline{U_1}\cdots \overline{U_j}$ in each $H_{jr}(U)$ separately. Let us write down elementary symmetric polynomials for these expressions separately. Let us formally change back previously considered expressions $\overline{U_1}\cdots \overline{U_j}$  to $\overline{U_1 \cdots U_j}$ in these formulas. We obtain the right formulas for the coefficients $(-1)^{k+1} C_{(k)}$, $k=1, \ldots, N-1$, which we call the generalized Vieta's formulas in Clifford geometric algebras.
\end{theorem}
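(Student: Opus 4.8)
The plan is to imitate, almost verbatim, the proof of Theorem~\ref{thGeneral} together with the model computation (\ref{proof1})--(\ref{proof4}) for $n=4$. The reason this works is that the bar-operation $\overline{U}=2\langle U\rangle_0-U$ shares with the conjugations $\bigtriangleup_3,\ldots,\bigtriangleup_m$ exactly the two features that argument relies on: it is $\mathbb{R}$-linear (hence commutes with multiplication by the scalar $\lambda$ and distributes over sums) and it fixes the identity, $\overline{e}=e$; moreover it is an involution, $\overline{\overline{U}}=U$. Like those operations, $\overline{\,\cdot\,}$ is \emph{not} multiplicative, $\overline{UV}\neq\overline{U}\,\overline{V}$ in general, which is precisely why it cannot be dragged down to the individual occurrences of $U$ but must instead be handled by ``expanding inside the bar''; by contrast $\widetilde{\,\cdot\,}$ and $\widehat{\,\cdot\,}$ are (anti)automorphisms fixing $e$ and may be pushed all the way to the leaves. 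Since the only operation appearing in any of the three types (\ref{type123}) besides $\widetilde{\,\cdot\,}$ and $\widehat{\,\cdot\,}$ is the single bar-operation, the cases $r=1,2,3$ are all covered by one and the same argument.

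First I would take an explicit formula of type (\ref{type123}) for $\Det(U)$ (it exists by Theorem~\ref{th5}) and evaluate it at $\lambda e-U$ in place of $U$; by the definition of the characteristic polynomial this computes $\varphi_U(\lambda)=\Det(\lambda e-U)$. Since $\langle\lambda e-U\rangle_0=\lambda-\langle U\rangle_0$, one has $\overline{\lambda e-U}=\lambda e-\overline{U}$, and likewise $\widetilde{\lambda e-U}=\lambda e-\widetilde{U}$, $\widehat{\lambda e-U}=\lambda e-\widehat{U}$. Next I would push $\widetilde{\,\cdot\,}$ and $\widehat{\,\cdot\,}$ through all products until they act only on atoms $U$, and then, working from the innermost bar outward, expand the polynomial in $\lambda$ sitting inside each $\overline{\,\cdot\,}$ into a sum of monomials, pull the bar through that sum by linearity, and simplify by $\overline{e}=e$ and $\overline{\overline{U}}=U$; after finitely many steps no bar encloses a sum. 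Expanding the remaining products and collecting powers of $\lambda$ then displays $\varphi_U(\lambda)=\Det(\lambda e-U)$ as a polynomial in $\lambda$ with multivector coefficients, and comparison with (\ref{char}) reads off each $C_{(k)}$, $k=1,\ldots,N$, as an explicit expression built from $U$, $\widetilde{\,\cdot\,}$, $\widehat{\,\cdot\,}$, $\overline{\,\cdot\,}$, and multiplication.

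The final step is to recognise that this expression is exactly the one prescribed in the statement. The key observation is that expanding a product of brackets $(\lambda e-z_i)$ and then applying the linear, $e$-fixing map $\overline{\,\cdot\,}$ would invoke multiplicativity of $\overline{\,\cdot\,}$ \emph{only} on the monomials of degree $\geq 2$ in the selected ``$-z_i$'' leaves --- but those are precisely the monomials that retain their enclosing bar in the recipe, so no discrepancy is introduced there, whereas on the monomials of degree $0$ (all leaves equal to $\lambda e$) and degree $1$ the bar acts identically with or without the formal change, because $\overline{e}=e$ and $\overline{\overline{U}}=U$. Hence the coefficient of $\lambda^{N-k}$ is obtained, term by term, by first formally replacing every $\overline{U_1\cdots U_j}$ inside each $H_{jr}(U)$ by $\overline{U_1}\cdots\overline{U_j}$ --- so that each summand of $\Det(\lambda e-U)$ becomes a genuine product $\prod_{i=1}^{N}(\lambda e-z_i)$ of $N$ atoms (with $z_1=U$) whose expansion produces the noncommutative elementary symmetric polynomials of the $z_i$ in left-to-right order --- and then restoring the bars. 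This is precisely the assertion of the theorem, and equivalently the reformulation in Note~\ref{note2} with ${\rm F}$ read off from (\ref{type123}).

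I expect the only genuinely delicate point to be the combinatorial bookkeeping in this last step: checking that ``replace $\overline{U_1\cdots U_j}$ by $\overline{U_1}\cdots\overline{U_j}$, form elementary symmetric polynomials, restore the bars'' is a well-defined manipulation of formal expressions that commutes with extraction of the coefficient of $\lambda^{N-k}$ even when the bars are arbitrarily nested and the double-bar collapses $\overline{\overline{U}}=U$ intervene. I would carry this out exactly as in the model computation (\ref{proof1})--(\ref{proof4}), now applied recursively along the bar-nesting; no new phenomenon occurs beyond what is already present in the proof of Theorem~\ref{thGeneral}, so the argument is uniform in $n\geq 1$ and in $r\in\{1,2,3\}$.
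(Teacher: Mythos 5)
Your proposal is correct and follows essentially the same route as the paper: the paper's proof simply declares the argument analogous to that of Theorem~\ref{thGeneral}, relying on exactly the two properties you isolate --- linearity of the bar-operation and $\overline{e}=e$ --- and your more detailed treatment of nested bars and the collapse $\overline{\overline{U}}=U$ is a faithful elaboration of that same argument rather than a different method.
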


\begin{proof} The proof is analogous to the proof of Theorem \ref{thGeneral}. We use that the operation $\overline{U}$ is linear and does not change the identity element $e$. \qed
\end{proof}

\begin{note}
Alternatively (analogously to Note 2), we can use the following equivalent algorithm. Let us consider an arbitrary element $U\in\cl_{p,q}$, $n=p+q$, $N=2^{[\frac{n+1}{2}]}$ with the explicit formula for $C_{(N)}=-\Det(U)$ of the types (\ref{type123}). Then all other characteristic polynomial coefficients are
    \begin{eqnarray}
        C_{(k)}=(-1)^{k+1}\sum_{X_l\in X(k)}{\rm F}(X_l),\quad
        k = 1,2,\ldots,N,
    \end{eqnarray}
    where ${\rm F}$ is $N$-variable function obtained from (\ref{type123}) by enumerating every occurrence of the element $U$ from left to right (in each of $t_r$ terms respectively), and $X(k)$ is the set of all possible tuples with $k$ elements $U$ and $N-k$ identity elements $e$:
    \begin{eqnarray}
        \!\!\!\!\!X(k)=\left\lbrace X_l=(x_1,x_2,\ldots,x_N) \, | \,
                    x_i\in\lbrace e,U\rbrace, \quad \sum_{i=1}^N x_i=kU+(N-k)e
                \right\rbrace.
    \end{eqnarray}
\end{note}

\begin{example} In the cases $n=1, 2$, from the formula
\begin{eqnarray}
\Det(U)=-C_{(2)}=U\overline{U},
\end{eqnarray}
we obtain
\begin{eqnarray}
C_{(1)}=U+\overline{U}.
\end{eqnarray}
In the cases $n=3, 4$, from the formula 
\begin{eqnarray}
\Det(U)=-C_{(4)}=\frac{1}{3}U U \overline{UU}+\frac{2}{3} U \overline{ \overline{U}\, \overline{\overline{U}\, \overline{U}}},
\end{eqnarray}
we obtain
\begin{eqnarray}
C_{(1)}&=&2(U+\overline{U}),\\
-C_{(2)}&=&UU+\frac{1}{3}\overline{UU}+\frac{2}{3}\overline{U}\,\overline{U}+\frac{8}{3}U\overline{U}+\frac{4}{3}\overline{\overline{U}U},\\
C_{(3)}&=&\frac{2}{3}U\overline{UU}+\frac{2}{3}UU\overline{U}+\frac{4}{3}U\overline{\overline{U}U}+\frac{2}{3}\overline{\overline{U}\,\overline{\overline{U}\,\overline{U}}}+\frac{2}{3}U\overline{U}\,\overline{U}.
\end{eqnarray}
%These formulas coincide with the formulas from \cite{det} (presented after Theorem 4).
In the cases $n=3, 4$, from the formula 
\begin{eqnarray}
\Det(U)=-C_{(4)}=U\widetilde{U} \overline{U \widetilde{U}},
\end{eqnarray}
we obtain
\begin{eqnarray}
C_{(1)}&=&U+\widetilde{U}+\overline{U}+\overline{\widetilde{U}},\\
-C_{(2)}&=&\overline{U \widetilde{U}}+\widetilde{U}\overline{\widetilde{U}}+\widetilde{U}\overline{U}+U\overline{\widetilde{U}}+U\overline{U}+U\widetilde{U},\\
C_{(3)}&=&\widetilde{U}\overline{U\widetilde{U}}+U\overline{U\widetilde{U}}+U\widetilde{U}\overline{\widetilde{U}}+U\widetilde{U}\overline{U}.
\end{eqnarray}
In the case $n=5$, we can get the right formulas for all characteristic polynomial coefficients using the same algorithm and the formula
\begin{eqnarray}\Det(U)=-C_{(8)}=J \widehat{J} \overline{J \widehat{J}}=U \widehat{\widetilde{U}} \widehat{U} \widetilde{U}\overline{U \widehat{\widetilde{U}} \widehat{U} \widetilde{U}}.
\end{eqnarray}
For example, we get
\begin{eqnarray}
C_{(1)}&=&U +\widehat{\widetilde{U}} +\widehat{U} +\widetilde{U}+\overline{U} +\overline{\widehat{\widetilde{U}}}+\overline{\widehat{U}}+\overline{\widetilde{U}},\\
-C_{(2)}&=&U \widehat{\widetilde{U}}+U \widehat{U}+U \widetilde{U}+U \overline{U}+U \overline{ \widehat{\widetilde{U}}}+U \overline{\widehat{U}}+U \overline{ \widetilde{U}}+\widehat{\widetilde{U}} \widehat{U}+ \widehat{\widetilde{U}}\widetilde{U}+\widehat{\widetilde{U}} \overline{U}\\
&+&\widehat{\widetilde{U}}\overline{\widehat{\widetilde{U}}}+\widehat{\widetilde{U}} \overline{\widehat{U}}+\widehat{\widetilde{U}}\overline{\widetilde{U}}+\widehat{U} \widetilde{U}+\widehat{U} \overline{U}+\widehat{U} \overline{\widehat{\widetilde{U}}}+\widehat{U} \overline{ \widehat{U} }+\widehat{U} \overline{\widetilde{U}}+\widetilde{U}\overline{U}+\widetilde{U}\overline{\widehat{\widetilde{U}}}\nonumber\\
&+&\widetilde{U}\overline{\widehat{U}}+\widetilde{U}\overline{\widetilde{U}}+\overline{U \widehat{\widetilde{U}}}+\overline{U \widehat{U} }+\overline{U \widetilde{U}}+\overline{ \widehat{\widetilde{U}} \widehat{U} }+\overline{\widehat{\widetilde{U}} \widetilde{U}}+\overline{ \widehat{U} \widetilde{U}},\nonumber\\
C_{(7)}&=&\widehat{\widetilde{U}} \widehat{U} \widetilde{U}\overline{U \widehat{\widetilde{U}} \widehat{U} \widetilde{U}}+U \widehat{U} \widetilde{U}\overline{U \widehat{\widetilde{U}} \widehat{U} \widetilde{U}}+U \widehat{\widetilde{U}} \widetilde{U}\overline{U \widehat{\widetilde{U}} \widehat{U} \widetilde{U}}+U \widehat{\widetilde{U}} \widehat{U} \overline{U \widehat{\widetilde{U}} \widehat{U} \widetilde{U}}\\
&+&U \widehat{\widetilde{U}} \widehat{U} \widetilde{U}\overline{ \widehat{\widetilde{U}} \widehat{U} \widetilde{U}}+U \widehat{\widetilde{U}} \widehat{U} \widetilde{U}\overline{U  \widehat{U} \widetilde{U}}+U \widehat{\widetilde{U}} \widehat{U} \widetilde{U}\overline{U \widehat{\widetilde{U}} \widetilde{U}}+U \widehat{\widetilde{U}} \widehat{U} \widetilde{U}\overline{U \widehat{\widetilde{U}} \widehat{U} }.\nonumber
\end{eqnarray}
Similarly, we can write down explicit formulas for the characteristic polynomial coefficients $C_{(3)}$, $C_{(4)}$, $C_{(5)}$, $C_{(6)}$. We do not do this here because of their cumbersomeness.

In the cases $n=5, 6$, we can get the right formulas of another type for all characteristic polynomial coefficients using the same algorithm and the formula
\begin{eqnarray}\Det(U)=-C_{(8)}=\frac{1}{3}U\widetilde{U} U\widetilde{U} \overline{U\widetilde{U} U\widetilde{U}}+\frac{2}{3} U\widetilde{U} \overline{ \overline{U\widetilde{U}}\, \overline{\overline{U\widetilde{U}}\, \overline{U\widetilde{U}}}}.
\end{eqnarray}
For example, we get
\begin{eqnarray}
C_{(1)}&=&2(U+\widetilde{U}+\overline{U}+\overline{\widetilde{U}}),\\
C_{(2)}&=&3U\widetilde{U}+UU+\widetilde{U} U+\widetilde{U} \widetilde{U}+\frac{8}{3}(U \overline{U}+U\overline{\widetilde{U}}+\widetilde{U}\overline{U}+\widetilde{U}\overline{\widetilde{U}})+\frac{7}{3}\overline{U\widetilde{U}}\\
&+&\frac{4}{3}(\overline{\overline{U}U}+\overline{\overline{U}\widetilde{U}}+\overline{\overline{\widetilde{U}}U}+ \overline{\overline{\widetilde{U}}\widetilde{U}})+\frac{2}{3}(\overline{U}\,\overline{U}+\overline{U}\,\overline{\widetilde{U}}+\overline{\widetilde{U}}\,\overline{U}+\overline{\widetilde{U}}\,\overline{\widetilde{U}})\nonumber\\
&+& \frac{1}{3}(\overline{UU}+\overline{\widetilde{U} U}+\overline{\widetilde{U}\widetilde{U}}),\nonumber\\
C_{(7)}&=&\frac{1}{3}(\widetilde{U} U\widetilde{U} \overline{U\widetilde{U} U\widetilde{U}}+U U\widetilde{U} \overline{U\widetilde{U} U\widetilde{U}}+U\widetilde{U} \widetilde{U} \overline{U\widetilde{U} U\widetilde{U}}+U\widetilde{U} U\overline{U\widetilde{U} U\widetilde{U}}\\
&+&U\widetilde{U} U\widetilde{U} \overline{\widetilde{U} U\widetilde{U}}+U\widetilde{U} U\widetilde{U} \overline{U U\widetilde{U}}+U\widetilde{U} U\widetilde{U} \overline{U\widetilde{U} \widetilde{U}}+U\widetilde{U} U\widetilde{U} \overline{U\widetilde{U} U})\nonumber\\
&+&\frac{2}{3} (\widetilde{U} \overline{ \overline{U\widetilde{U}}\, \overline{\overline{U\widetilde{U}}\, \overline{U\widetilde{U}}}}+U\overline{ \overline{U\widetilde{U}}\, \overline{\overline{U\widetilde{U}}\, \overline{U\widetilde{U}}}}+U\widetilde{U} \overline{ \overline{\widetilde{U}}\, \overline{\overline{U\widetilde{U}}\, \overline{U\widetilde{U}}}}
+U\widetilde{U} \overline{ \overline{U}\, \overline{\overline{U\widetilde{U}}\, \overline{U\widetilde{U}}}}\nonumber\\
&
+&U\widetilde{U} \overline{ \overline{U\widetilde{U}}\, \overline{\overline{\widetilde{U}}\, \overline{U\widetilde{U}}}}
+U\widetilde{U} \overline{ \overline{U\widetilde{U}}\, \overline{\overline{U}\, \overline{U\widetilde{U}}}}
+U\widetilde{U} \overline{ \overline{U\widetilde{U}}\, \overline{\overline{U\widetilde{U}}\, \overline{\widetilde{U}}}}
+U\widetilde{U} \overline{ \overline{U\widetilde{U}}\, \overline{\overline{U\widetilde{U}}\, \overline{U}}})\nonumber
\end{eqnarray}
Similarly, we can write down explicit formulas for the characteristic polynomial coefficients $C_{(3)}$, $C_{(4)}$, $C_{(5)}$, $C_{(6)}$. We do not do this here because of their cumbersomeness.
\end{example}

\section{Conclusions}\label{sectConcl}

In this paper, we discuss a generalization of Vieta's formulas to the case of (Clifford) geometric algebras. We apply the Gelfand--Retakh theorem to the characteristic polynomial in geometric algebras. We show how to express characteristic coefficients in terms of combinations of various involutions of elements. We compare the generalized Vieta's formulas with the ordinary Vieta's formulas for eigenvalues. The role of the roots (which are complex scalars) of the characteristic equation is played by some combinations of involutions of elements (which are not scalars). The cases of small dimensions $n\leq 3$ are discussed in details. The case of arbitrary eigenvalues (including the case of degenerate eigenvalues) is considered. We introduce the notion of a simple basis-free formula for the determinant in geometric algebra and prove that a formula of this type exists in the case of arbitrary dimension. Using this notion, we present and prove generalized Vieta theorem in geometric algebra of arbitrary dimension. In this theorem, we obtain explicit formulas for all characteristic polynomial coefficients from the highest one, which is the determinant. Alternative three approaches to finding optimized formulas for the determinant and other characteristic polynomial coefficients using the bar-operation are discussed.

We hope that the new approaches presented in this paper (related to noncommutative symmetric functions and generalized Vieta's formulas) will help to find more optimized (simple) formulas for the determinant and inverse in geometric algebras in the cases $n\geq 7$. We thank one of the anonymous reviewers of this paper for pointing out possible applications of the results of this work in theoretical and mathematical physics, in particular, in (quantum) field theory on pseudo-Riemannian manifolds and the Yang--Mills theory.

\subsubsection{Acknowledgements} The publication was prepared within the framework of the Academic Fund Program at the HSE University in 2022 (grant 22-00-001). 

The results of this paper were reported at the International Conference
Computer Graphics International 2022 (CGI2022), Empowering Novel Geometric Algebra for Graphics \& Engineering Workshop (Geneva, Switzerland, September 2022). The author is grateful to the organizers and the participants of this conference for fruitful discussions.

The author is grateful to K. Abdulkhaev and N. Marchuk for useful discussions. The author is grateful to the anonymous reviewers for their careful reading of the paper and helpful comments on how to improve the presentation.

\noindent{\bf Data availability} Data sharing not applicable to this article as no datasets
were generated or analyzed during the current study.

\noindent{\bf Conflict of interest}
This work does not have any conflicts of interest.

%\bibliographystyle{spmpsci}
%\bibliography{myBibLib}

\end{document}